\numberwithin{equation}{section}
\newtheorem{theorem}{Theorem}[section]
\newtheorem{corollary}[theorem]{Corollary}
\newtheorem{lemma}[theorem]{Lemma}
\newtheorem{remark}[theorem]{Remark}
\newtheorem{assumption}{Assumption}
\newcommand{\cf}{{cf.\ }}
\newcommand{\nn}{\mathbb{N}} 
\newcommand{\norm}[1]{\left\Vert {#1} \right\Vert} 
\newcommand{\dom}[1]{\mathrm{dom}\,{#1}} 
\newcommand{\NN}{\mathbb{N}}
\newcommand{\act}[1]{\left\langle {#1} \right\rangle} 
\newcommand{\seq}[2]{\{{#1}_{{#2}}\}_{{#2} \in \mathbb{N}}}
\newcommand{\Seq}[2]{\{{#1}^{{#2}}\}_{{#2} \in \mathbb{N}}}
\newcommand{\argmin}{\mathrm{argmin}}
\newcommand{\diag}{\mathrm{diag}}
\newcommand{\bh}{{\bf h}}
\newcommand{\bp}{{\bf p}}
\newcommand{\bu}{{\bf u}}
\newcommand{\bv}{{\bf v}}
\newcommand{\bx}{{\bf x}}
\newcommand{\bbx}{{\bf X}}
\newcommand{\bbq}{{\bf Q}}
\newcommand{\balpha}{{\mbox{\boldmath $\alpha$}}}
\newcommand{\bo}{{\bf 0}}
\newcommand{\bbu}{{\bf U}}
\newcommand{\bbi}{{\bf I}}
\newcommand{\by}{{\bf y}}
\newcommand{\bb}{{\bf b}}
\newcommand{\bba}{{\bf A}}
\newcommand{\bbd}{{\bf D}}
\newcommand{\real}{\mathbb{R}} 
\newcommand{\ed}[1]{#1}
\definecolor{light-gray}{gray}{0.7}
\newcommand{\delete}[1]{#1}
\title{The Cyclic Block Conditional Gradient Method for \\Convex Optimization Problems}
\author{Amir Beck\thanks{Faculty of Industrial Engineering and Management, Technion, Haifa, Israel ({\tt \{becka,epauwels,ssabach\}@ie.technion.ac.il}).} \and Edouard Pauwels\footnotemark[1] \and Shoham Sabach\footnotemark[1]}
\date{\today}
\begin{document}
\maketitle
\begin{abstract}
	In this paper we study the convex problem of optimizing the sum of a smooth function and a compactly supported non-smooth term with a specific separable form. We analyze the block version of the generalized conditional gradient
	method when the blocks are chosen in a cyclic order. A global sublinear rate of
	convergence is established for two different stepsize strategies commonly used in this
	class of methods. Numerical comparisons of the proposed method to both the classical conditional
	gradient algorithm and its random block version demonstrate the
	effectiveness of the cyclic block update rule.
\end{abstract}

    \paragraph{Keywords:} Conditional gradient, cyclic block decomposition, iteration complexity, linear oracle, nonsmooth
    convex minimization, support vector machine.

\section{Introduction} \label{Sec:Introduction}
With the growth of  size of problems commonly encountered in many applied fields, there
	is a strong demand for numerical methods featuring low computational cost iterative
	schemes. By low computational cost, we mean algorithms which require at most matrix
	by vector multiplication (inversion of matrices are, for example, too expensive). In this
	context, it is necessary to propose and analyze numerical schemes that
	\begin{itemize}
		\item are based on computationally efficient steps;
		\item exploit problem structure and data information;
		\item enjoy global convergence properties and iteration complexity estimates.
	\end{itemize}
	We consider structured convex problems consisting of minimizing the sum of two terms: a smooth term, which is a composition of a smooth function and a linear mapping and a nonsmooth separable term.   We focus on programs for which the geometry of the non-smooth part exhibits such a degree of complexity that proximal-based methods \cite{BT09,BTet2013,N13} do not constitute a viable alternative. Indeed, the efficiency of these methods considerably deteriorates in situations that do not fit in a ``favorable geometric settings'', see \cite{CJN2013,HJN} for a more detailed description of this concept. Algorithms based on linear oracles, such as the conditional gradient method (also known as the Frank-Wolfe algorithm)  \cite{FW1956,LP1966,DR1967,DH1978,J2013}  and their extensions to structured composite problems  \cite{BL2008,BLM2009,B2012}, or block separable problems \cite{LJJSP2013}, are based on the principle of iteratively solving linearized subproblems. In settings where computing the proximal operator is too expensive, this approach has proven to be competitive. Successful examples of applications which benefit from this approach include trace-norm constrained or penalized problems \cite{JS2010,CJN2013}  and structured multiclass classification with extremely large number of classes  \cite{LJJSP2013}. \ed{On the theoretical side, for most of the algorithms mentioned in the references above, a sublinear rate of $O(1/k)$, both in function values and duality gap, is available}.\\
\indent Continuously increasing problem dimensions have motivated the principle of taking advantage of available block structure of the problem at hand. This led to the development of variable decomposition methods which break down the 	original large-scale problem into several much smaller subproblems that could be solved efficiently.

 In recent works, \cite{N2012} and \cite{RT2014} analyzed the average case iteration complexity of block versions of gradient,  projected gradient and forward-backward methods where at each iteration, the block to be updated is selected at random.
We refer to such a block selection rule as the \textit{random update rule}. In
	the context of linear oracles, \cite{LJJSP2013} applied this approach to the conditional
	gradient method focusing on implementing it for the \ed{structured} Support Vector Machine (SVM)
	training problem (see more details in Section \ref{SSec:Numerics-SVM}). Analysis of
	algorithms involving random update rules typically provides average case complexity results.  A different kind of works consider updating the blocks
	in a cyclic order. We refer to such a deterministic update rule as the \textit{cyclic update rule}. In
	this context, \cite{LT1992} provides an asymptotic analysis of exact coordinate
	minimization for composite strongly convex problems. In \cite{BTet2013}, a global rate of convergence result was established for the cyclic block coordinate gradient projection method for convex problems over feasible sets with a separable structure.
 In \cite{saha2013nonasymptotic}, an explicit rate is given for the block proximal gradient method for $\ell_1$ regularized convex problems.
	 For this line of works, the estimates given for the cyclic update
	rule deterministically hold for the sequence of function values. As we already
	mentioned, this is not the case for the random update rule for which only average case estimates are available.  \\
	\indent Another relevant feature of block decomposition methods is the fact that they usually allow to take a substantillay larger step at each iteration (with respect to each block) compared to their classical non-block counterparts \ed{(using, for example, fixed stepsize, backtracking or line search)}. This fact potentially gives a numerical advantage to block decomposition algorithms compared to classical variants, see \cite{BTet2013}.\\
\indent 	In this work, we propose a cyclic block version of the generalized conditional gradient
	method \cite{BL2008,BLM2009,B2012} which for ease of reference is called the Cyclic Block
	Conditional Gradient (CBCG). \ed{The word ``generalized" means that we consider a more general nonsmooth part, which is not necessarily an indicator function. The word ``cyclic" means here that each block is updated once at each iteration. The order in which the blocks are updated may vary arbitrarily between iterations and our analysis therefore includes the random permutation approach.} We provide \ed{deterministic and global convergence rate estimates} for the predefined stepsize strategy \cite{DH1978,J2013}, and for an adaptive stepsize
	strategy \cite{LP1966} \ed{including a backtracking version of it. These rates hold independently of the order of updating the blocks at each iteration}. We also establish rate estimates for an optimality measure in
	the spirit of \cite{J2013,B2012}\ed{, which constitutes an additional novelty compared to the results presented in \cite{BTet2013}.}
\ed{All the rates proved below have the form of $O(1/k)$ where $k$ is the number of complete iterations (over all blocks). Interestingly, this analysis leads to new convergence results for methods that do not relate directly to linear oracles at first sight. For example, our results lead to explicit deterministic rate estimates in terms of the duality gap for the cyclic and random permutation variants of the Stochastic Dual Coordinate Ascent (SDCA) of \cite{shalev2013stochastic}.}
	
	We numerically compare the proposed CBCG method (using both the cyclic and random permutation updating rules) to its random update rule counterpart, that is, the Random Block Conditional Gradient (RBCG), which was proposed and
	analyzed in \cite{LJJSP2013}. Extensive simulations on a large number of synthetic examples
	suggest that CBCG is competitive with both RBCG and the classical conditional gradient
	algorithm (CG). Finally, we also compare CBCG and RBCG on the problem of training the
	\ed{structured} SVM \cite{TGK2004,TJHA2005} for the optical character recognition task (OCR)
	originally proposed in \cite{TGK2004}. \ed{In this setting, we observe that the random permutation updating rule has advantage over the other updating rules.}\\
\indent  The next section is dedicated to the presentation of the model and main assumptions
	(see Section \ref{SSec:Model}) and to the description of the CBCG algorithm (see Section
	\ref{SSec:Algorithm}). Section \ref{Sec:OptMeas} presents few auxiliary results for an
	optimality measure which is commonly encountered when using methods which are based on linear
	oracles. In Section \ref{Sec:Analysis} we present our theoretical findings about the rate
	of convergence results of the CBCG method. We split this section into two subsections
	which deal with the two different stepsize strategies that we analyze in this paper
	(Section \ref{SSec:Pred} for the predefined stepsize and Section \ref{SSec:Adap} for the
	adaptive stepsize). We conclude Section \ref{Sec:Analysis} with a discussion on a
	backtracking version of the CBCG method (see Section \ref{SSec:Back}). \ed{Section \ref{Sec:ML} presents an extension of the analysis given in Section \ref{SSec:Adap} for the specific case where the stepsize is chosen using exact line search for problems in which the smooth part of the objective function is quadratic. This leads to a discussion about the implications for block coordinate descent methods.} Numerical
	experiments on synthetic data and on the \ed{structured} SVM training problem are presented in
	Section \ref{Sec:Numerics}.
\medskip

	{\bf Conventions.} Throughout the paper the underlying vector space is the $n$-dimensional
	Euclidean space $\real^{n}$ with the $l_2$-norm which is denoted by $\norm{\cdot}$. This notation is also used for the matrix norm, which is assumed to be the spectral norm. We
	will consider the partition of an arbitrary vector $\bx \in \real^{n}$ into $N$ blocks
	where each block consists of a subset of the $n$ coordinates. The size of each block (the
	number of coordinates) is given by the integer $n_{i}$ for $i = 1 , 2 , \ldots , N$, such
	that $\sum_{i = 1}^{N} n_{i} = n$. The $i$th block of a vector $\bx \in \real^{n}$ is
	denoted by $\bx_i$. We assume that $\bx \in \real^{n}$ can be written as follows
	\begin{equation*}
		\bx =
			\begin{pmatrix}
			  	\bx_1  \\
			  	\bx_2  \\
			  	\vdots \\
			  	\bx_N
			  \end{pmatrix}.
	\end{equation*}
	For any $i = 1 , 2 , \ldots , N$ we define the matrix $\bbu_{i} \in \real^{n \times n_{i}}$ as
	the sub-matrix of the $n \times n$ identity matrix consisting of the columns corresponding
	to the $i$th block. Thus, in particular,
	\begin{equation*}
		\left(\bbu_{1} , \bbu_{2} , \ldots , \bbu_{N}\right) = \bbi_{n}.
	\end{equation*}
	It is clear that using these notations, we have for any $\bx \in \real^{n}$ that $\bx_{i} =
	\bbu_{i}^{T}\bx$ and $\bx = \sum_{i = 1}^{N} \bbu_{i}\bx_{i}$. Finally, for any subset $S$ of $
	\real^{n}$, $\delta_{S}$ denotes the indicator function of $S$ which takes the value $0$ on
	$S$ and $+\infty$ otherwise.
	
\section{The Optimization Model and Algorithm} \label{Sec:OptModel}
\subsection{Problem Formulation and Assumptions} \label{SSec:Model}
	We consider the optimization model
	\begin{equation} \label{mainproblem}
		\min_{\bx \in \real^{n}} \left\{ H\left(\bx\right) \equiv F(\bba \bx) +
		  \sum_{i = 1}^{N} g_{i}(\bx_{i}) \right\},
	\end{equation}
	where $\bba \in \real^{m \times n}$. We make the following standing
	assumption on model \eqref{mainproblem}.
	\begin{assumption} \label{AssumptionA}
		$ $ \\ \vspace{-0.2in}
		\begin{itemize}
			\item[$\rm{(i)}$] $g_{i} : \real^{n_{i}} \rightarrow \left(-\infty , \infty\right]
				$, $i = 1 , 2 , \ldots , N$, is a proper, closed and convex
				function which satisfies
				\begin{itemize}
					\item $X_{i} \equiv \dom{g_{i}} \subseteq \real^{n_{i}}$ is a compact set
						with diameter $D_{i}$, that is,
						\begin{equation*}
							\sup_{\bx_{i} , \by_{i} \in X_{i}} \norm{\bx_{i} - \by_{i}} =
							D_{i}.
						\end{equation*}
					\item $g_{i}$ is globally Lipschitz on $X_{i}$ with constant $l_{i}$, that
						is,
						\begin{equation}
							\left| g_{i}\left(\bx_{i}\right) - g_{i}\left(\by_{i}\right)
							\right| \leq l_{i}\norm{\bx_{i} - \by_{i}}, \quad \forall \,\,
							\bx_{i} , \by_{i} \in X_{i}.
						\end{equation}
				\end{itemize}
			\item[$\rm{(ii)}$] $F : \real^{m} \rightarrow \ed{\real}$ is
				convex and continuously differentiable\footnote{\ed{A function is continuously differentiable over a given set $D$ if it is continuously differentiable over an open set containing $D$.}}  over $\bba\left(X_{1} \times X_{2} \times
				\cdots \times X_{N}\right) \subseteq \real^{m}$ and has Lipschitz continuous
				gradient with constant $L_{F}$, that is,
				\begin{equation*}
					\norm{\nabla F\left(\bx\right) - \nabla F\left(\by\right)} \leq L_{F}
					\norm{\bx - \by}, \quad \forall \,\, \bx , \by \in \bba\left(X_{1} \times
					X_{2} \times \cdots \times X_{N}\right).
				\end{equation*}
		\end{itemize}
	\end{assumption}
	Since $g_{i}$ is assumed to be convex, it immediately implies
	that the domain $X_{i}$ is a convex subset of $\real^{n_{i}}$ for $i = 1 , 2 , \ldots , N$. We set $g
	(\bx) \equiv \sum_{i = 1}^{N} g_{i}(\bx_i)$ and $f(\bx)\equiv F(\bba \bx)$. The domain of $g$ is denoted by $\dom{g}
	\equiv X$ and its diameter is denoted by $D$. Using these simplified notations, problem
	\eqref{mainproblem} actually consists of minimizing the sum $f + g$. It holds that $X \equiv
	X_{1} \times X_{2} \times \cdots \times X_{N}$ and therefore
	\begin{equation} \label{sumd}
		D^{2} = \sum_{i = 1}^{N} D_{i}^{2}.
	\end{equation}
	\begin{remark} \label{rem:tradiCG}
		By setting $g\left(\cdot\right) = \delta_{X}\left(\cdot\right)$, we recover the
		constrained optimization model that motivated the development of the traditional
		conditional gradient method (see \cite{J2013} and references therein). This is also the case when we add a linear
		term to the indicator.
	\end{remark}
	Under Assumption \ref{AssumptionA}, problem \eqref{mainproblem} is
	guaranteed to attain its optimal value, and therefore the optimal set, which is denoted by
	$X^{\ast}$, is nonempty and the corresponding optimal value is denoted by $H^{\ast} \in
	\real$. For each block $i \in \left\{ 1 , 2 , \ldots , N \right\}$, we employ the following
	notation:
	\begin{itemize}
		\item $\bba_{i} \equiv \bba \bbu_i$, so that $\bba = \left(\bba_{1} , \bba_{2} , \ldots , \bba_{N}\right)$;
		\item $\nabla_i f(\bx) \equiv \bbu_i^T \nabla f(\bx)$ denotes the partial gradient of $f$.
	\end{itemize}
	Using the previous notations, we have that $\nabla_{i} f\left(\bx\right) = \bba_{i}^{T}\nabla
	F(\bba \bx)$. We will use a refined notion of Lipschitz continuity that fits our block separable composite setting. This is expressed by
	 the following standing assumption.
	\begin{assumption} \label{AssumptionB}
		For each $i = 1 , 2 , \ldots , N$, there exists a constants $\beta_{i} > 0$, such that for any $
		\bx \in X$ and any $\bh_{i} \in \real^{n_{i}}$ satisfying $\bx + \bbu_{i}\bh_{i} \in
		X$, it holds that
		\begin{equation*}
			\norm{\nabla F(\bba\bx + \bba_{i}\bh_{i}) - \nabla F(\bba\bx)} \leq
			\beta_{i}\norm{\bba_{i}\bh_{i}}.
		\end{equation*}
	\end{assumption}
	Assumption \ref{AssumptionB} can be seen as a consequence of Assumption \ref{AssumptionA}.
	Indeed, it is always possible to set $\beta_{i} = L_{F}$, $i = 1 , 2 , \ldots , N$.
	However, adopting this more refined convention provides additional algorithmic flexibility which allows to take advantage of conditioning
	disparities between different blocks \ed{by using stepsizes which are functions of the Lipschitz constants of the blocks, rather than the global Lipschitz constant}. See for example Section  \ref{SSec:Exact}, where it is shown how this approach allows the usage of exact line search for quadratic problems. We also note that when $\bba = \bbi$, then $\beta_i$ can be chosen to be the $i$th block Lipschitz constant of the gradient of $F$ (see, e.g., \cite{BTet2013,B15}), which is always a quantity smaller than $L_F$. We define the following quantity
	\begin{equation} \label{def_beta_min}
		\beta_{\min} \equiv \min\left\{ \beta_{1} , \beta_{2} , \ldots , \beta_{N} \right\} >
		0.
	\end{equation}
	The following important result will play a central role in the forthcoming analysis. The proof is almost identical to the well known proof of the descent lemma (see \cite{B99}), and is thus given in Appendix \ref{sec:app}.
	\begin{lemma}[Composite block descent lemma] \label{L:BlockDescent}
		Let $i \in \left\{ 1 , 2 , \ldots , N
		\right\}$, then for any $\bx \in X$ and $\bh_{i} \in \real^{n_{i}}$ such that $\bx
		+ \bbu_{i}\bh_{i} \in X$, we have
		\begin{equation} \label{L:BlockDescent:1}
			f(\bx + \bbu_{i}\bh_{i}) \leq f(\bx) + \act{\nabla_{i} f(\bx
			), \bh_{i}} + \frac{\beta_{i}}{2}\norm{\bba_{i}\bh_{i}}^2.
		\end{equation}
	\end{lemma}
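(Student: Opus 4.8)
The plan is to follow the classical proof of the descent lemma, adapted to the composite block structure by passing through the outer smooth function $F$. The starting observation is that, since $\bba_{i} = \bba\bbu_{i}$, we have $f(\bx + \bbu_{i}\bh_{i}) = F(\bba\bx + \bba_{i}\bh_{i})$, so the claim reduces to a one-dimensional estimate along the segment $t \mapsto \bba\bx + t\bba_{i}\bh_{i}$ for $t \in [0,1]$. First I would invoke convexity of $X$ (guaranteed by Assumption \ref{AssumptionA}): since both $\bx$ and $\bx + \bbu_{i}\bh_{i}$ lie in $X$, so does $\bx + \bbu_{i}(t\bh_{i})$ for every $t \in [0,1]$, and hence $\bba\bx + t\bba_{i}\bh_{i} \in \bba(X_{1} \times \cdots \times X_{N})$, the set on which $F$ is continuously differentiable by Assumption \ref{AssumptionA}(ii). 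This legitimizes applying the fundamental theorem of calculus to $\phi(t) \equiv F(\bba\bx + t\bba_{i}\bh_{i})$.

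The second step is to write
\begin{equation*}
	f(\bx + \bbu_{i}\bh_{i}) - f(\bx) = \int_{0}^{1} \act{\nabla F(\bba\bx + t\bba_{i}\bh_{i}), \bba_{i}\bh_{i}}\,dt,
\end{equation*}
and then to add and subtract the value of the gradient at $t = 0$, splitting the integrand into the fixed term $\act{\nabla F(\bba\bx), \bba_{i}\bh_{i}}$ plus the remainder $\act{\nabla F(\bba\bx + t\bba_{i}\bh_{i}) - \nabla F(\bba\bx), \bba_{i}\bh_{i}}$. The fixed term integrates trivially in $t$ and, using $\nabla_{i} f(\bx) = \bba_{i}^{T}\nabla F(\bba\bx)$, equals exactly $\act{\nabla_{i} f(\bx), \bh_{i}}$, which is precisely the first-order term in \eqref{L:BlockDescent:1}.

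It then remains to bound the remainder. Here I would apply the Cauchy--Schwarz inequality followed by Assumption \ref{AssumptionB}, invoked for the admissible increment $t\bh_{i}$ (admissible precisely because of the convexity argument above), to obtain
\begin{equation*}
	\act{\nabla F(\bba\bx + t\bba_{i}\bh_{i}) - \nabla F(\bba\bx), \bba_{i}\bh_{i}} \leq \beta_{i}\norm{t\bba_{i}\bh_{i}}\,\norm{\bba_{i}\bh_{i}} = \beta_{i} t\norm{\bba_{i}\bh_{i}}^{2}.
\end{equation*}
Integrating $\beta_{i} t\norm{\bba_{i}\bh_{i}}^{2}$ over $t \in [0,1]$ produces the factor $\int_{0}^{1} t\,dt = 1/2$ and thus the quadratic term $\frac{\beta_{i}}{2}\norm{\bba_{i}\bh_{i}}^{2}$. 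Combining the two pieces yields \eqref{L:BlockDescent:1}. The computation itself is entirely routine; the one point requiring genuine care --- and the step I would check most carefully --- is the domain bookkeeping, namely verifying that the whole segment remains inside $\bba(X_{1} \times \cdots \times X_{N})$ so that $F$ is differentiable there and Assumption \ref{AssumptionB} is legitimately applicable at each $t$. This is exactly where the convexity (and, for well-posedness, the compactness) of the blocks $X_{i}$ enters.
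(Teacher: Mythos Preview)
Your proposal is correct and follows essentially the same route as the paper's proof: both apply the fundamental theorem of calculus along the segment $t\mapsto \bx+t\bbu_i\bh_i$, add and subtract the gradient at $t=0$, and then bound the remainder via Cauchy--Schwarz and Assumption~\ref{AssumptionB} before integrating $t\beta_i\|\bba_i\bh_i\|^2$ over $[0,1]$. The only cosmetic difference is that you pass to $F$ immediately via $f(\bx+\bbu_i\bh_i)=F(\bba\bx+\bba_i\bh_i)$, whereas the paper starts with $\nabla f$ and converts to $\nabla F$ inside the integrand; your explicit domain bookkeeping (convexity of $X$ keeping the segment in $\bba(X_1\times\cdots\times X_N)$) is a welcome clarification that the paper leaves implicit.
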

\subsection{The Cyclic Block Conditional Gradient (CBCG) Method} \label{SSec:Algorithm}
	The generalized conditional gradient method \cite{BL2008,BLM2009,B2012} can be applied to
	problem \eqref{mainproblem} when the corresponding linear oracle is available. Therefore we assume that for
	any $\bx \in X$, the solution of the following problem can be easily computed:
	\begin{equation*}
		\min_{\bv \in X} \left\{ \act{\nabla f\left(\bx\right) , \bv} + g\left(\bv\right)
		\right\}.
	\end{equation*}
	We exploit here the separability of the function $g$ (see Section \ref{SSec:Model}) and
	propose a block decomposition extension of the generalized conditional gradient method
	which we call the Cyclic Block Conditional Gradient (CBCG) method. Before stating the
	algorithm, we will need the following additional notation. Let $\Seq{\bx}{k}$ be a given
	sequence, then for any $i = 1 , 2, \ldots , N$, we define
	\begin{equation} \label{BlockUpdate}
		\bx^{k,i} =
			\begin{pmatrix}
				\bx_{1}^{k + 1} \\
				\vdots \\
				\bx_{i}^{k + 1} \\
				\bx_{i + 1}^{k} \\
				\vdots \\
				\bx_{N}^{k}
			\end{pmatrix}.
	\end{equation}
	That is, the first $i$ blocks in $\bx^{k,i}$ are those of $\bx^{k+1}$ and the remaining $N-i$ blocks are those of $\bx^k$.
     It is clear that using this notation we have  $\bx^{k,0} = \bx^{k}$ and $
	\bx^{k + 1} = \bx^{k,N}$. The algorithm is given now.
\bigskip

    \fcolorbox{black}{Ivory2}{\parbox{15cm}{{\bf CBCG: Cyclic Block Conditional Gradient} \\
    {\bf Initialization.} $\bx^{0} \in X$ and $\alpha_{i}^{k} \in \left[0 , 1\right]$ for all
    		$k \in \nn$ and $i = 1  ,2, \ldots , N$. \\
    {\bf General Step.} For $k = 1 , 2 , \ldots$,
        \begin{itemize}
					\item[$\rm{(i)}$] For any $i = 1 , 2 , \ldots , N$, compute
						\begin{equation} \label{CBCG:1}
							\bp_{i}^{k} \in \argmin_{\bp_{i} \in X_{i}} \left\{ \langle \nabla_{i}
							f(\bx^{k,i - 1}) , \bp_{i} \rangle  + g_{i}(\bp_{i}) \right
							\},
						\end{equation}
						and then
						\begin{equation} \label{CBCG:2}
							\bx^{k,i} = \bx^{k,i - 1} + \alpha_{i}^{k}\bbu_{i}(\bp_{i}^{k} -
							\bx^{k,i - 1}_{i}).
						\end{equation}
					\item[$\rm{(ii)}$] Update $\bx^{k + 1} = \bx^{k,N}$.
				\end{itemize}
		}
	}
\bigskip

	We will first analyze, in Section \ref{SSec:Pred}, the convergence rate of the CBCG method
	using a predefined stepsize \cite{DH1978}. Here, the predefined stepsize that we use is given, for any $i = 1 , 2 , \ldots , N$, by
	\begin{equation*}
		\alpha_{i}^{k} = \alpha^{k} \equiv \frac{2}{k + 2}.
	\end{equation*}
	In Section \ref{SSec:Adap}, we will consider an adaptive stepsize rule \cite{LP1966}, which is
	determined by the minimization of the quadratic upper bound of $H$ related to
	\eqref{L:BlockDescent:1}. The expression of this stepsize will be made precise below (see
	\eqref{adaptive}). The backtracking variant of the CBCG with adaptive stepsize rule is presented and analyzed in Section \ref{SSec:Back}.
	
\section{The Optimality Measure} \label{Sec:OptMeas}
	In this section, we describe few properties of an optimality measure including its block counterparts. This measure is typical when discussing
	methods which are based on linear oracles and usually plays a crucial role in the
	convergence analysis, see \cite{BT2004} for an overview and \cite{LJJSP2013,B2012} for a link with Fenchel duality.  For any $\bx
	\in X$, we define the following quantity
	\begin{equation} \label{D:Px}
		p(\bx) \in \argmin_{\bp \in X} \left\{ \langle \nabla f(\bx) , \bp\rangle
		+ g(\bp) \right \},
	\end{equation}
	as well as the optimality measure
	\begin{equation} \label{D:OptMea}
		S(\bx) \equiv \max_{\bp \in X} \left\{ \langle \nabla f(\bx) , \bx -
		\bp\rangle  + g(\bx) - g(\bp) \right\} = \langle \nabla f(\bx) ,
		\bx - p(\bx)\rangle  + g(\bx) - g(p(\bx)),
	\end{equation}
	where the last equality follows from \eqref{D:Px}. The function $S$ is an optimality
	measure in the sense that it is non-negative on $X$, and it is zero only on $X^{\ast}$. Furthermore, for any $\bx \in X$, the quantity $S\left(\bx\right)$ is an upper bound on $H(\bx)-H^*$ as stated in the following lemma whose short proof is given for the sake of completeness.
	\begin{lemma} \label{L:UppBoundS} $S(\bx) \geq H\left(\bx\right) - H^{\ast}$.
	\end{lemma}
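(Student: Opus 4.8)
The plan is to exploit the fact that $S(\bx)$ is defined as a \emph{maximum} over the feasible set $X$, so it dominates the value of the same expression evaluated at any single admissible point. A natural choice is a minimizer $\bx^{\ast} \in X^{\ast}$, which is guaranteed to exist and lie in $X$ under Assumption \ref{AssumptionA}. Plugging $\bp = \bx^{\ast}$ into \eqref{D:OptMea} immediately gives the lower bound
\begin{equation*}
	S(\bx) \geq \langle \nabla f(\bx) , \bx - \bx^{\ast} \rangle + g(\bx) - g(\bx^{\ast}).
\end{equation*}

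The second ingredient is the convexity of $f$. Since $f(\bx) = F(\bba\bx)$ is the composition of the convex function $F$ (Assumption \ref{AssumptionA}(ii)) with the linear mapping $\bx \mapsto \bba\bx$, the function $f$ is itself convex, and the gradient (subgradient) inequality yields
\begin{equation*}
	\langle \nabla f(\bx) , \bx - \bx^{\ast} \rangle \geq f(\bx) - f(\bx^{\ast}).
\end{equation*}
Substituting this into the previous display and regrouping the terms produces
\begin{equation*}
	S(\bx) \geq \bigl( f(\bx) + g(\bx) \bigr) - \bigl( f(\bx^{\ast}) + g(\bx^{\ast}) \bigr) = H(\bx) - H(\bx^{\ast}) = H(\bx) - H^{\ast},
\end{equation*}
which is exactly the claimed inequality.

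There is essentially no technical obstacle here; the argument is two lines once the right feasible point is chosen. The only points requiring care are bookkeeping ones: one must verify that $\bx^{\ast}$ is genuinely an element of $X$ so that it is admissible in the maximization defining $S$ (this follows from $X^{\ast} \subseteq X$ and the nonemptiness of $X^{\ast}$ guaranteed after Remark \ref{rem:tradiCG}), and that the convexity of $f$ is legitimately inherited from $F$ through the linear precomposition. With these two facts in hand, the combination of the ``max-dominates-a-point'' step and the convexity estimate is the entirety of the proof.
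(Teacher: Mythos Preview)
Your proof is correct and follows essentially the same approach as the paper: plug an optimal point $\bx^{\ast}\in X^{\ast}$ into the maximum defining $S(\bx)$, then apply the gradient inequality for the convex function $f$ to recover $H(\bx)-H^{\ast}$. The paper phrases the first step via the minimizer $p(\bx)$ rather than the $\max$ directly, but this is the same argument.
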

	\begin{proof}
		For any $\bx^{\ast} \in X^{\ast}$ we have
		\begin{align*}
			S(\bx) & = \act{\nabla f(\bx) , \bx - p(\bx)} +
			g(\bx) - g(p(\bx)) \\
			& = \act{\nabla f(\bx) , \bx} + g(\bx)- \left [\act{\nabla f
			(\bx) , p(\bx)} + g(p(\bx))\right ] \\
			& \geq \act{\nabla f(\bx) , \bx} + g(\bx)- \left [\act{\nabla
			f(\bx) , \bx^{\ast}} + g(\bx^{\ast})\right ] \\
			& = \act{\nabla f(\bx) , \bx - \bx^{\ast}} + g(\bx) -
			g(\bx^{\ast}),
		\end{align*}
		where the inequality follows from \eqref{D:Px}. Using the convexity of $f$, we obtain that
		\begin{equation*}
			S(\bx) \geq \act{\nabla f(\bx) , \bx - \bx^{\ast}} +
			g(\bx) - g(\bx^{\ast}) \geq f(\bx) -
			f(\bx^{\ast}) + g(\bx) - g(\bx^{\ast}) =
			H(\bx) - H^{\ast}.
		\end{equation*}
		This proves the desired result.
	\end{proof}
We refine the notations introduced in \eqref{D:Px} and \eqref{D:OptMea} in order to fit them to our block structured setting.
 For any $\bx \in X$ and any $i = 1 , 2 , \ldots , N$,
	we set
	\begin{equation} \label{D:Pxi}
		p_{i}(\bx) \in \argmin_{\bp_{i} \in X_{i}} \left\{ \act{\nabla_{i} f
		(\bx) , \bp_{i}} + g_{i}(\bp_{i}) \right \},
	\end{equation}
	and  define the block optimality measure, \ed{ which was already introduced in \cite{LJJSP2013} when $g_{i} \equiv 0$,}
	\begin{equation} \label{D:OptMeai}
		S_{i}(\bx) \equiv \max_{\bp_{i} \in X_{i}} \left\{ \act{\nabla f_{i}
		(\bx) , \bx_{i} - \bp_{i}} + g_{i}(\bx_{i}) - g_{i}(\bp_{i}
		) \right\}.
	\end{equation}		
	It is clear that in this case it is also true that
	\begin{equation} \label{FactOptMe}
		S_{i}\left(\bx\right)  = \act{\nabla f_{i}\left(\bx\right) , \bx_{i} - p_{i}\left(\bx
		\right)} + g_{i}\left(\bx_{i}\right) - g_{i}\left(p_{i}\left(\bx\right)\right).
	\end{equation}
	Using the separability of both $g$ and $X$, we have for any $\bx \in X$ that
	\begin{equation} \label{sum_S}
		S\left(\bx\right) = \sum_{i = 1}^{N} S_{i}\left(\bx\right).
	\end{equation}
	There might be multiple optimal solutions for problem \eqref{D:Px} and also for
	problem \eqref{D:Pxi}. Our only assumption is that the choices
of $p_1(\bx),p_2(\bx),\ldots,p_N(\bx)$ and $p(\bx)$ are made under the restriction that
	\begin{equation*}
		p(\bx) =
			\begin{pmatrix}
				p_{1}(\bx) \\
				p_{2}(\bx) \\
				\vdots \\
				p_{N}(\bx)
			\end{pmatrix}.
	\end{equation*}
	The following Lipschitz-type property of the block optimality measure $S_{i}$, $i = 1 , 2 ,
	\ldots , N$, will be crucial in the forthcoming analysis.
	\begin{lemma} \label{L:LipSi}
		Let $\bx , \by \in X$ be two vectors
		which satisfy $\bx_{i} = \by_{i}$ for some $i = 1 , 2 , \ldots , N$. Then the
		following inequality holds
		\begin{equation*}
			\left|S_{i}(\bx) - S_{i}(\by)\right| \leq L_{F}D_{i}
			\norm{\bba_{i}} \cdot \norm{\bba(\bx - \by)}    .
		\end{equation*}
	\end{lemma}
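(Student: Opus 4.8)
The plan is to use the standard ``difference of two maxima'' device, exploiting that $S_{i}$ is itself defined as a maximum and that the maximizer at $\bx$ is a feasible point for the maximization defining $S_{i}(\by)$. Recall from \eqref{FactOptMe} that the maximum defining $S_{i}(\bx)$ is attained at $p_{i}(\bx) \in X_{i}$, so that
\[
  S_{i}(\bx) = \act{\nabla_{i} f(\bx) , \bx_{i} - p_{i}(\bx)} + g_{i}(\bx_{i}) - g_{i}(p_{i}(\bx)).
\]
First I would bound $S_{i}(\by)$ from below by inserting the (generally suboptimal) feasible point $p_{i}(\bx)$ into the maximization \eqref{D:OptMeai} defining $S_{i}(\by)$, which gives
\[
  S_{i}(\by) \geq \act{\nabla_{i} f(\by) , \by_{i} - p_{i}(\bx)} + g_{i}(\by_{i}) - g_{i}(p_{i}(\bx)).
\]

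Next I would subtract these two relations. The hypothesis $\bx_{i} = \by_{i}$ enters in two ways: it cancels the terms $g_{i}(\bx_{i}) - g_{i}(\by_{i}) = 0$, and it permits replacing $\by_{i} - p_{i}(\bx)$ by $\bx_{i} - p_{i}(\bx)$, so that the gradient difference factors out of a single inner product:
\[
  S_{i}(\bx) - S_{i}(\by) \leq \act{\nabla_{i} f(\bx) - \nabla_{i} f(\by) , \bx_{i} - p_{i}(\bx)}.
\]
Using the identity $\nabla_{i} f(\bx) = \bba_{i}^{T}\nabla F(\bba\bx)$, I would then apply Cauchy--Schwarz, submultiplicativity of the spectral norm together with $\norm{\bba_{i}^{T}} = \norm{\bba_{i}}$, the Lipschitz property of $\nabla F$ from Assumption \ref{AssumptionA}(ii), and the diameter bound $\norm{\bx_{i} - p_{i}(\bx)} \leq D_{i}$ (both points lying in $X_{i}$). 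This chain of estimates yields
\[
  S_{i}(\bx) - S_{i}(\by) \leq \norm{\bba_{i}} \cdot L_{F}\norm{\bba(\bx - \by)} \cdot D_{i}.
\]

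Finally, since the roles of $\bx$ and $\by$ are symmetric (the argument used nothing distinguishing them beyond $\bx_{i} = \by_{i}$), the identical bound holds for $S_{i}(\by) - S_{i}(\bx)$, and combining the two inequalities produces the absolute value claimed. I do not anticipate any genuine obstacle here, as this is an elementary estimate once the maximization structure is handled correctly. The only point requiring care is the bookkeeping around $\bx_{i} = \by_{i}$: it must be invoked both to cancel the $g_{i}$ contributions and to merge the two inner products into one containing the gradient difference, since without it the term $\by_{i} - p_{i}(\bx)$ would not collapse and the estimate would fail to close.
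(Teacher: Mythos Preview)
Your proposal is correct and follows essentially the same approach as the paper: both arguments insert $p_{i}(\bx)$ as a feasible point in the maximization defining $S_{i}(\by)$, use $\bx_{i}=\by_{i}$ to collapse the $g_{i}$ contributions and align the inner products, and then bound the resulting gradient-difference term via Cauchy--Schwarz, the Lipschitz property of $\nabla F$, and the diameter of $X_{i}$, finishing by symmetry. The only cosmetic difference is the order in which the steps are written (the paper adds and subtracts $\nabla_{i}f(\by)$ first, then invokes the max definition), but the logical content is identical.
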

	\begin{proof}
		From \eqref{FactOptMe} we have
		\begin{align}		
			S_{i}(\bx) & = \act{\nabla f_{i}\left(\bx\right) , \bx_{i} - p_{i}(\bx)} + g_{i}(\bx_{i}) - g_{i}(p_{i}(\bx)) \label{L:LipSi:1} \\
			& = \act{\nabla f_{i}(\by) , \bx_{i} - p_{i}(\bx)} + g_{i}
			(\bx_{i}) - g_{i}(p_{i}(\bx)) + \act{\nabla f_{i}
			(\bx) -  \nabla f_{i}(\by), \bx_{i} - p_{i}(\bx)}.
			\nonumber
		\end{align}
		Now, using the fact that $f\left(\bx\right) \equiv F\left(A\bx\right)$  and Assumption \ref{AssumptionA}(ii), we obtain
		\begin{align}		
			\act{\nabla f_{i}(\bx) -  \nabla f_{i}(\by), \bx_{i} - p_{i}
			(\bx)} & = \act{\bba_{i}^{T}(\nabla F(\bba\bx) -  \nabla F
			(\bba\by)) , \bx_{i} - p_{i}(\bx)} \nonumber \\
			& = \act{\nabla F(\bba\bx) -  \nabla F(\bba\by) , \bba_{i}
			(\bx_{i} - p_{i}(\bx))} \nonumber \\
			& \leq \norm{\nabla F(\bba\bx) -  \nabla F(\bba\by)} \cdot
			\norm{\bba_{i}(\bx_{i} - p_{i}(\bx))} \nonumber\\
			& \leq L_{F}\norm{\bba(\bx - \by)} \cdot \norm{\bba_{i}(\bx_{i} - p_{i}
			(\bx))} \nonumber \\
			& \leq L_{F}D_{i}\norm{\bba_{i}} \cdot \norm{\bba(\bx - \by)},
			\label{L:LipSi:2}
		\end{align}
		where the first inequality follows from the Cauchy-Schwarz inequality and the last
		inequality follows from the fact that both $\bx_{i}$ and $p_{i}(\bx)$
		belong to $X_{i}$ (see Assumption \ref{AssumptionA}(i)). Finally, by combining
		\eqref{L:LipSi:1} with \eqref{L:LipSi:2}, and using the fact that $\bx_{i} = \by_{i}$,
		we obtain that
		\begin{align}
			S_{i}(\bx) & \leq \act{\nabla f_{i}(\by) , \bx_{i} - p_{i}
			(\bx)} + g_{i}(\bx_{i}) - g_{i}(p_{i}(\bx)
			) + L_{F}D_{i}\norm{\bba_{i}} \cdot \norm{\bba(\bx - \by)} \nonumber \\
			& = \act{\nabla f_{i}(\by) , \by_{i} - p_{i}
			(\bx)} + g_{i}(\by_{i}) - g_{i}(p_{i}(\bx)
			) + L_{F}D_{i}\norm{\bba_{i}} \cdot \norm{\bba(\bx - \by)} \nonumber \\
			& \leq S_{i}(\by) + L_{F}D_{i}\norm{\bba_{i}} \cdot \norm{\bba(\bx - \by
			)}, \label{L:LipSi:3}
		\end{align}
		where the last inequality follows from the definition of $S_{i}$ (see \eqref{D:OptMeai}).
		Changing the roles of $\bx$ and $\by$, we also obtain that
		\begin{equation*}
			S_{i}(\by) \leq S_{i}(\bx) +L_{F}D_{i}\norm{\bba_{i}} \cdot
			\norm{\bba(\bx - \by)},
		\end{equation*}
		which along with \eqref{L:LipSi:3} yields the desired result.
	\end{proof}

\section{Convergence Analysis of the CBCG Method} \label{Sec:Analysis}
	This section is devoted to the convergence analysis of the CBCG algorithm. We will first
	prove, in Section \ref{SSec:Pred}, a sublinear convergence rate for the variant with the predefined
	stepsize rule. A similar rate of convergence will be
	then established, in Section \ref{SSec:Adap}, for the variant with the adaptive stepsize rule. Finally, we describe a backtracking procedure in Section
	\ref{SSec:Back} which allows to use the CBCG method when the constants $\beta_{i}$, $i = 1 , 2 ,
	\ldots , N$, given in Assumption
	\ref{AssumptionB} are unknown in advance. We begin with an extension of Lemma
	\ref{L:BlockDescent} that holds for any choice of stepsize.
	\begin{lemma} \label{L:Descent}
		Let $\Seq{\bx}{k}$ be the sequence generated by the CBCG method. Then, for any $k \geq 0$ and
		$i \in \left\{ 1 , 2 , \ldots , N \right\}$, we have
		\begin{equation*}
			H(\bx^{k,i}) \leq H(\bx^{k,i - 1}) - \alpha_{i}^{k}
			S_{i}(\bx^{k,i - 1}) + \frac{(\alpha_{i}^{k})^{2}\beta_{i}}
			{2}\|\bba_{i}(\bp_{i}^{k} - \bx_{i}^{k})\|^{2}.
		\end{equation*}
	\end{lemma}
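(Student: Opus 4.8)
The plan is to split $H = f + g$ and estimate the two pieces separately along the single-block step $\bx^{k,i} = \bx^{k,i - 1} + \alpha_{i}^{k}\bbu_{i}(\bp_{i}^{k} - \bx_{i}^{k})$, using the key bookkeeping fact that block $i$ of $\bx^{k,i-1}$ is still $\bx_{i}^{k}$ (only blocks $1,\ldots,i-1$ have been refreshed at this stage). Before applying any descent estimate I would note that $\bx^{k,i} \in X$: its $i$th block equals the convex combination $(1 - \alpha_{i}^{k})\bx_{i}^{k} + \alpha_{i}^{k}\bp_{i}^{k}$ of two points of the convex set $X_{i}$, while all other blocks are unchanged.

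For the smooth part I would apply the composite block descent lemma (Lemma \ref{L:BlockDescent}) at the feasible point $\bx^{k,i-1}$ with the increment $\bh_{i} = \alpha_{i}^{k}(\bp_{i}^{k} - \bx_{i}^{k})$, which gives
\begin{equation*}
	f(\bx^{k,i}) \leq f(\bx^{k,i-1}) + \alpha_{i}^{k}\act{\nabla_{i} f(\bx^{k,i-1}), \bp_{i}^{k} - \bx_{i}^{k}} + \frac{(\alpha_{i}^{k})^{2}\beta_{i}}{2}\norm{\bba_{i}(\bp_{i}^{k} - \bx_{i}^{k})}^{2}.
\end{equation*}
For the nonsmooth part I would use separability: since $\bx^{k,i}$ and $\bx^{k,i-1}$ differ only in block $i$, one has $g(\bx^{k,i}) - g(\bx^{k,i-1}) = g_{i}(\bx_{i}^{k,i}) - g_{i}(\bx_{i}^{k})$, and convexity of $g_{i}$ applied to the convex combination above yields $g_{i}(\bx_{i}^{k,i}) \leq (1 - \alpha_{i}^{k})g_{i}(\bx_{i}^{k}) + \alpha_{i}^{k}g_{i}(\bp_{i}^{k})$, hence $g(\bx^{k,i}) - g(\bx^{k,i-1}) \leq \alpha_{i}^{k}(g_{i}(\bp_{i}^{k}) - g_{i}(\bx_{i}^{k}))$.

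Adding the two bounds, the quadratic term is already in the desired form and the first-order contributions collect into $\alpha_{i}^{k}\left[\act{\nabla_{i} f(\bx^{k,i-1}), \bp_{i}^{k} - \bx_{i}^{k}} + g_{i}(\bp_{i}^{k}) - g_{i}(\bx_{i}^{k})\right]$. The last step — and the only one needing a small observation — is to recognize this bracket as $-S_{i}(\bx^{k,i-1})$: the oracle point $\bp_{i}^{k}$ of \eqref{CBCG:1} is precisely $p_{i}(\bx^{k,i-1})$ from \eqref{D:Pxi}, and $\bx_{i}^{k,i-1} = \bx_{i}^{k}$, so the bracket is the negative of the right-hand side of \eqref{FactOptMe} evaluated at $\bx^{k,i-1}$. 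Substituting $-\alpha_{i}^{k}S_{i}(\bx^{k,i-1})$ then delivers the claimed inequality. I anticipate no real obstacle here; the proof is a direct composition of the descent lemma with the convexity of $g_{i}$, the only delicate point being to track correctly which blocks of $\bx^{k,i-1}$ coincide with those of $\bx^{k}$.
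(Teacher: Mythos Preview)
Your proposal is correct and follows essentially the same approach as the paper: apply the composite block descent lemma to bound $f$, use separability plus convexity of $g_i$ to bound $g$, and then identify the first-order terms with $-\alpha_i^k S_i(\bx^{k,i-1})$ via \eqref{FactOptMe} and the observation $\bx_i^{k,i-1}=\bx_i^k$. The only cosmetic difference is that the paper carries $\bx_i^{k,i-1}$ through the computation and substitutes $\bx_i^k$ at the very end, whereas you make the substitution upfront; your added remark that $\bx^{k,i}\in X$ is a small bonus the paper leaves implicit.
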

	\begin{proof}
		First, by the definition of the main step of the CBCG method (see \eqref{CBCG:2}), we
		have
		\begin{align*}
			H(\bx^{k,i}) & = f(\bx^{k,i}) + g(\bx^{k,i}) \\
			& = f(\bx^{k,i - 1} + \alpha_{i}^{k}\bbu_{i}(\bp_{i}^{k} - \bx_{i}^{k,i - 1}
			)) + g(\bx^{k,i - 1} + \alpha_{i}^{k}\bbu_{i}(\bp_{i}^{k} -
			\bx_{i}^{k,i - 1})).
		\end{align*}
		We can now use Lemma \ref{L:BlockDescent} to obtain
		\begin{align}
			H(\bx^{k,i}) & \leq f(\bx^{k,i - 1}) + \alpha_{i}^{k}
			\langle \nabla_{i} f(\bx^{k,i - 1}) , \bp_{i}^{k} - \bx_{i}^{k,i - 1}\rangle
			\nonumber \\
			& + \frac{(\alpha_{i}^{k})^{2}\beta_{i}}{2}\|\bba_{i}(\bp_{i}^{k}
			- \bx_{i}^{k, i - 1})\|^{2} + g (\bx^{k,i - 1} + \alpha_{i}^{k}\bbu_{i}
			(\bp_{i}^{k} - \bx_{i}^{k,i - 1}) ). \label{L:Descent:1}
		\end{align}
		The last term can be bounded from above as follows:
		\begin{align}
			g(\bx^{k,i - 1} + \alpha_{i}^{k}\bbu_{i}(\bp_{i}^{k} - \bx_{i}^{k,i - 1}
			)) & = \sum_{j = 1 , j \neq i}^{N} g_{j}(\bx_{j}^{k,i - 1})
			+ g_{i}((1 - \alpha_{i}^{k})\bx_{i}^{k,i - 1} + \alpha_{i}^{k}
			\bp_{i}^{k}) \nonumber \\
			& \leq \sum_{j = 1 , j \neq i}^{N} g_{j}(\bx_{j}^{k,i - 1})
			+ (1 - \alpha_{i}^{k})g_{i}(\bx_{i}^{k,i - 1}) + \alpha_{i}
			^{k}g_{i}(\bp_{i}^{k}) \nonumber \\
			& = g(\bx^{k,i - 1}) + \alpha_{i}^{k}(g_{i}(\bp_{i}^{k})
			- g_{i}(\bx_{i}^{k,i - 1})), \label{L:Descent:2}
		\end{align}
		where the inequality follows from the convexity of each $g_{i}$, $i = 1 , 2 , \ldots ,
		N$. Now, combining \eqref{L:Descent:1} with \eqref{L:Descent:2} and using the
		definition of $S_{i}$ (see \eqref{FactOptMe}), we obtain
		\begin{align*}
	 		H(\bx^{k,i}) & \leq f(\bx^{k,i - 1}) + g(\bx^{k,i - 1}
	 		) - \alpha_{i}^{k}(\langle \nabla_{i} f(\bx^{k,i - 1}) , \bx_{i}
	 		^{k,i - 1} - \bp_{i}^{k}\rangle  + g_{i}(\bx_{i}^{k,i - 1}) - g_{i}
	 		(\bp_{i}^{k})) \\
	 		& + \frac{(\alpha_{i}^{k})^{2}\beta_{i}}{2}\|\bba_{i}(\bp_{i}^{k}
	 		- \bx_{i}^{k, i - 1})\|^{2} \\
			& = H(\bx^{k,i - 1}) - \alpha_{i}^{k}S_{i}(\bx^{k,i - 1}) +
			\frac{(\alpha_{i}^{k})^{2}\beta_{i}}{2}\|\bba_{i}(\bp_{i}^{k} -
			\bx_{i}^{k, i - 1})\|^{2} \\
	 		& = H(\bx^{k,i - 1}) - \alpha_{i}^{k}S_{i}(\bx^{k,i - 1}) +
			\frac{(\alpha_{i}^{k})^{2}\beta_{i}}{2}\|\bba_{i}(\bp_{i}^{k} -
			\bx_{i}^{k})\|^{2},
		\end{align*}
		where the last inequality follows from the fact that $\bx_{i}^{k} = \bx_{i}^{k, i - 1}
		$ (see \eqref{BlockUpdate}). This proves the desired result.
	\end{proof}
We also need an explicit expression of the distance between two consecutive iterates generated by the CBCG method. The following lemma also holds for any choice of stepsize.
	\begin{lemma} \label{L:IterateGap}
		Let $\Seq{\bx}{k}$ be the sequence generated by the CBCG method. Then, for any $k \in \nn$, we have
		\begin{equation*}
			\|\bx^{k + 1} - \bx^{k}\|^{2} = \sum_{j = 1}^{N} (\alpha_{j}^{k})^{2}
			\|\bp_{j}^{k} - \bx_{j}^{k,j - 1}\|^{2}.
		\end{equation*}
		Furthermore, for any $i = 0 , 1 , \ldots , N$, we have
		\begin{equation*}
			\|\bx^{k,i}	- \bx^{k}\|^{2} \leq \|\bx^{k + 1} - \bx^{k}\|^{2}.
		\end{equation*}
	\end{lemma}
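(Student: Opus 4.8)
The plan is to exploit the disjoint-block structure of the successive updates in \eqref{CBCG:2}, which reduces both assertions to a single Pythagorean computation. First I would express the displacement from $\bx^k$ to $\bx^{k,i}$ as a telescoping sum. Since $\bx^{k,0} = \bx^k$ by \eqref{BlockUpdate}, for any $i \in \{0,1,\ldots,N\}$ we have
\[
	\bx^{k,i} - \bx^k = \sum_{j=1}^i \left(\bx^{k,j} - \bx^{k,j-1}\right) = \sum_{j=1}^i \alpha_j^k \bbu_j\left(\bp_j^k - \bx_j^{k,j-1}\right),
\]
where the last equality is precisely the update rule \eqref{CBCG:2}.

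The crucial observation is that the $j$-th summand $\alpha_j^k \bbu_j(\bp_j^k - \bx_j^{k,j-1})$ is supported entirely on block $j$, since $\bbu_j$ injects $\real^{n_j}$ into the coordinates of the $j$-th block and vanishes elsewhere. Hence the summands corresponding to distinct indices $j$ have pairwise disjoint supports and are therefore mutually orthogonal in $\real^n$. Invoking the Pythagorean identity together with the fact that $\bbu_j$ has orthonormal columns (so that $\norm{\bbu_j \bv} = \norm{\bv}$ for every $\bv$), I would obtain
\[
	\norm{\bx^{k,i} - \bx^k}^2 = \sum_{j=1}^i (\alpha_j^k)^2 \norm{\bp_j^k - \bx_j^{k,j-1}}^2.
\]

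Setting $i = N$ and recalling $\bx^{k+1} = \bx^{k,N}$ immediately yields the first identity. For the second assertion, the displayed identity shows that $\norm{\bx^{k,i} - \bx^k}^2$ is a partial sum, over $j = 1, \ldots, i$, of the nonnegative terms $(\alpha_j^k)^2 \norm{\bp_j^k - \bx_j^{k,j-1}}^2$, whose total over $j = 1, \ldots, N$ equals $\norm{\bx^{k+1} - \bx^k}^2$; monotonicity of partial sums of nonnegative quantities then gives the inequality. I do not anticipate any genuine obstacle: the only step needing care is the orthogonality of the block increments, which is an immediate consequence of the definition of $\bbu_j$, and it is worth noting that the entire argument holds for any choice of the stepsizes $\alpha_j^k$.
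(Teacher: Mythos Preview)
Your proposal is correct and follows essentially the same approach as the paper: both exploit that the successive updates modify disjoint blocks, so the squared norm decomposes as a sum over blocks, and the inequality follows because $\|\bx^{k,i}-\bx^k\|^2$ is a partial sum of the nonnegative block contributions that make up $\|\bx^{k+1}-\bx^k\|^2$. The only cosmetic difference is that you derive the single identity $\|\bx^{k,i}-\bx^k\|^2=\sum_{j=1}^i(\alpha_j^k)^2\|\bp_j^k-\bx_j^{k,j-1}\|^2$ first via telescoping and orthogonality of the $\bbu_j$, whereas the paper decomposes directly blockwise and handles the two assertions separately.
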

	\begin{proof}
		For any fixed $k \in \nn$, we have from the definition of the iterates of the CBCG
		algorithm (see \eqref{CBCG:2}) and \eqref{BlockUpdate} that
		\begin{equation*}
			\|\bx^{k + 1} - \bx^{k}\|^{2} = \sum_{j = 1}^{N} \|\bx_{j}^{k,N} - \bx_{j}
			^{k}\|^{2} = \sum_{j = 1}^{N} (\alpha_{j}^{k})^{2}\|\bp_{j}^{k} -
			\bx_{j}^{k,j - 1}\|^{2},
		\end{equation*}
		which proves the first statement. Furthermore, since for any $i = 0 , 1 , \ldots , N$,
		$\bx_{j}^{k,i} = \bx_{j}^{k}$ for all $j > i$ and $\bx_{j}^{k,i} =\bx^{k,N}$ for all $j
		\leq i$, we have
		\begin{align*}
			\|\bx^{k,i} - \bx^{k}\|^{2} & = \sum_{j = 1}^{N} \|\bx_{j}^{k,i} - \bx_{j}
			^{k}\|^{2} = \sum_{j = 1}^{i} \|\bx_{j}^{k,i} - \bx_{j}^{k}\|^{2} = \sum_{j = 1}
			^{i} \|\bx_{j}^{k,N} - \bx_{j}^{k}\|^{2} \nonumber \\
			& \leq \sum_{j = 1}^{N} \|\bx_{j}^{k,N} - \bx_{j}^{k}\|^{2} = \|\bx^{k + 1}
			- \bx^{k}\|^{2},
		\end{align*}
		which proves the second statement and the proof is complete.
	\end{proof}
	
\subsection{Analysis for the Predefined Stepsize Strategy} \label{SSec:Pred}
	In this section, we analyze the CBCG algorithm with the predefined stepsize given by
	\begin{equation} \label{pre-defined}
		\alpha_{i}^{k} = \alpha^{k} \equiv \frac{2}{k + 2}.
	\end{equation}
	\ed{Note that this stepsize rule is completely blind to disparities between blocks and therefore does not allow to counterbalance them in a way that will increase the convergence speed.} The main step towards the proof of convergence rate in this case is recorded in the
	following lemma.
	\begin{lemma} \label{L:PreMain}
		Let $\Seq{\bx}{k}$ be the sequence generated by the CBCG method with the
		stepsize given in \eqref{pre-defined}. Then, for any $k \geq 0$, we have
		\begin{equation*}
			H(\bx^{k + 1}) \leq H(\bx^{k}) - \alpha^{k}S(\bx^{k}
			) + \frac{C_1}{2}(\alpha^{k})^{2},
		\end{equation*}
		where
		\begin{equation} \label{L:PreMain:1}
			C_1 = \sum_{i = 1}^{N} \beta_{i}\norm{\bba_{i}}^{2}D_{i}^{2} + 2L_{F}D\norm{\bba}
			\sum_{i = 1}^{N} D_{i}\norm{\bba_{i}}.
		\end{equation}
	\end{lemma}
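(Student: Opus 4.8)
The plan is to obtain the one-step inequality by summing the block descent estimate of Lemma \ref{L:Descent} over the $N$ inner steps and then replacing the block measures $S_i$ evaluated at the intermediate iterates by those evaluated at $\bx^k$. First I would sum the inequality of Lemma \ref{L:Descent} over $i = 1 , 2 , \ldots , N$. Since $\alpha_i^k = \alpha^k$ under the predefined rule, and since $\bx^{k,0} = \bx^k$ while $\bx^{k,N} = \bx^{k+1}$, the left-hand side telescopes to $H(\bx^{k+1}) - H(\bx^k)$, yielding
\[
H(\bx^{k+1}) - H(\bx^k) \leq -\alpha^k \sum_{i=1}^N S_i(\bx^{k,i-1}) + \frac{(\alpha^k)^2}{2}\sum_{i=1}^N \beta_i \norm{\bba_i(\bp_i^k - \bx_i^k)}^2.
\]

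The crux is that the first sum involves $S_i(\bx^{k,i-1})$ rather than $S_i(\bx^k)$, whereas the target term is $S(\bx^k) = \sum_{i=1}^N S_i(\bx^k)$ via \eqref{sum_S}. The key observation is that $\bx^{k,i-1}$ and $\bx^k$ coincide on block $i$ by construction (blocks $i, i+1, \ldots, N$ of $\bx^{k,i-1}$ are inherited from $\bx^k$), which is precisely the hypothesis of Lemma \ref{L:LipSi}. Applying it with the roles $\bx = \bx^k$ and $\by = \bx^{k,i-1}$ gives $S_i(\bx^{k,i-1}) \geq S_i(\bx^k) - L_F D_i \norm{\bba_i} \cdot \norm{\bba(\bx^k - \bx^{k,i-1})}$; summing over $i$ converts the first term into $-\alpha^k S(\bx^k)$ plus an error of $\alpha^k L_F \sum_{i=1}^N D_i \norm{\bba_i} \cdot \norm{\bba(\bx^k - \bx^{k,i-1})}$.

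The main technical point, and the step I expect to require the most care, is bounding this perturbation uniformly. Using $\norm{\bba(\bx^k - \bx^{k,i-1})} \leq \norm{\bba} \cdot \norm{\bx^k - \bx^{k,i-1}}$ together with the second statement of Lemma \ref{L:IterateGap}, I would pass to $\norm{\bx^{k+1} - \bx^k}$, whose square is given by the first statement of the same lemma. Bounding each factor $\norm{\bp_j^k - \bx_j^{k,j-1}} \leq D_j$ (both points lie in $X_j$, and $\bx_j^{k,j-1} = \bx_j^k$), and invoking \eqref{sumd}, yields $\norm{\bx^{k+1} - \bx^k} \leq \alpha^k D$. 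Consequently each error term is at most $(\alpha^k)^2 L_F \norm{\bba} D \, D_i \norm{\bba_i}$, and summing produces the cross term $2 L_F D \norm{\bba} \sum_{i=1}^N D_i \norm{\bba_i}$ of $C_1$ (after the factor $1/2$ is extracted). For the remaining quadratic term I would similarly use $\norm{\bba_i(\bp_i^k - \bx_i^k)}^2 \leq \norm{\bba_i}^2 D_i^2$, giving the first summand $\sum_{i=1}^N \beta_i \norm{\bba_i}^2 D_i^2$. Collecting both contributions reproduces exactly the constant $C_1$ of \eqref{L:PreMain:1} and completes the proof.
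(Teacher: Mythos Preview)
Your proposal is correct and follows essentially the same route as the paper: sum Lemma \ref{L:Descent} over the blocks, replace $S_i(\bx^{k,i-1})$ by $S_i(\bx^k)$ via Lemma \ref{L:LipSi}, and control the resulting perturbation with Lemma \ref{L:IterateGap} together with the diameter bounds. The only cosmetic difference is that the paper bounds the quadratic term $\|\bba_i(\bp_i^k-\bx_i^k)\|^2 \leq \|\bba_i\|^2 D_i^2$ before summing rather than at the end.
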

	\begin{proof}
		For any $k \geq 0$ and each $i \in \left\{ 1 , 2 , \ldots , N \right\}$, we have from
		Lemma \ref{L:Descent} that
		\begin{align}
			H(\bx^{k,i}) & \leq H(\bx^{k,i - 1}) - \alpha^{k}S_{i}
			(\bx^{k,i - 1}) + \frac{(\alpha^{k})^{2}\beta_{i}}{2}
			\|\bba_{i}(\bp_{i}^{k} - \bx_{i}^{k})\|^{2} \nonumber \\
	 		& \leq H(\bx^{k,i - 1}) - \alpha^{k}S_{i}(\bx^{k,i - 1}) +
	 		\frac{(\alpha^{k})^{2}\beta_{i}\|\bba_{i}\|^{2}D_{i}^{2}}{2}. \label{L:PreMain:2}
		\end{align}
		Summing \eqref{L:PreMain:2} for $i = 1 , 2 , \ldots , N$ yields
		\begin{align}
			H(\bx^{k + 1}) & = H(\bx^{k,N}) \nonumber\\
			& \leq H(\bx^{k,0}) - \alpha^{k}\sum_{i = 1}^{N} S_{i}(\bx^{k,i -
			1}) + \frac{(\alpha^{k})^{2}}{2}\sum_{i = 1}^{N} (\beta_{i}
			\|\bba_{i}\|^{2}D_{i}^{2}) \nonumber \\
			& = H(\bx^{k}) - \alpha^{k}\sum_{i = 1}^{N} S_{i}(\bx^{k}) +
			\frac{(\alpha^{k})^{2}}{2}\sum_{i = 1}^{N} (\beta_{i}\|\bba_{i}\|
			^{2}D_{i}^{2}) + \alpha^{k}\sum_{i = 1}^{N} (S_{i}(\bx^{k})
			- S_{i}(\bx^{k,i - 1})) \nonumber\\
	 		& = H(\bx^{k}) - \alpha^{k}S(\bx^{k}) +
	 		\frac{(\alpha^{k})^{2}}{2}\sum_{i = 1}^{N} (\beta_{i}\|\bba_{i}\|
			^{2}D_{i}^{2}) + \alpha^{k}\sum_{i = 1}^{N} (S_{i}(\bx^{k})
			- S_{i}(\bx^{k,i - 1})), \label{L:PreMain:3}
		\end{align}
		where the last equality follows from \eqref{sum_S}. Using
		Lemma \ref{L:LipSi}, and the fact that $\bx_{i}^{k,i - 1} = \bx_{i}^{k}$  (see  \eqref{BlockUpdate}) gives, for any $i = 1 , 2 , \ldots , N$, that
		\begin{equation} \label{L:PreMain:4}
			S_{i}(\bx^{k}) - S_{i}(\bx^{k,i - 1}) \leq L_{F}D_{i}
			\norm{\bba_{i}} \cdot \|\bba(\bx^{k} - \bx^{k,i - 1})\|.
		\end{equation}
		Combining \eqref{L:PreMain:3} and \eqref{L:PreMain:4} yields
		\begin{equation} \label{L:PreMain:5}
			H(\bx^{k + 1}) \leq H(\bx^{k}) - \alpha^{k}S(\bx^{k}
			) + \frac{(\alpha^{k})^{2}}{2}\sum_{i = 1}^{N} (\beta_{i}
			\|\bba_{i}\|^{2}D_{i}^{2}) + \alpha^{k}L_{F}\sum_{i = 1}^{N} D_{i}
			\|\bba_{i}\| \cdot \|\bba(\bx^{k} - \bx^{k,i - 1})\|.
		\end{equation}
		From Lemma \ref{L:IterateGap}, for any $i = 1 , 2 , \ldots , N$, we have
		\begin{align}
			\|\bba(\bx^{k} - \bx^{k,i - 1})\|^{2} & \leq \|\bba\|^{2} \cdot
			\|\bx^{k} - \bx^{k,i - 1}\|^{2} \leq \|\bba\|^{2} \cdot \|\bx^{k} - \bx^{k +
			1}\|^{2} \nonumber\\
		    & = (\alpha^{k})^{2}\|\bba\|^{2}\sum_{j = 1}^{N} \|\bp_{j}^{k} -
		    \bx_{j}^{k}\|^{2} \leq (\alpha^{k})^{2}\|\bba\|^{2}\sum_{j = 1}^{N} D_{j}
		    ^{2} \nonumber \\
	  		& = (\alpha^{k})^{2}\|\bba\|^{2}D^{2}. \label{L:PreMain:6}
		\end{align}
		Combining \eqref{L:PreMain:5} and \eqref{L:PreMain:6} yields
		\begin{equation*}
			H(\bx^{k + 1}) \leq H(\bx^{k}) - \alpha^{k}S(\bx^{k}
			) + \frac{(\alpha^{k})^{2}}{2}\left [\sum_{i = 1}^{N} \beta_{i}
			\|\bba_{i}\|^{2}D_{i}^{2} + 2L_{F}D\|\bba\| \sum_{i = 1}^{N} D_{i}\|\bba_i\|
			\right ],
		\end{equation*}
		which proves the desired result.
	\end{proof}
	We now recall the following technical lemma on sequences of non-negative numbers (\cf
	\cite[Lemma 1]{B2012}).
	\begin{lemma} \label{L:Technical1}
		Let $\seq{a}{k}$ and $\seq{b}{k}$ be two sequences satisfying, for any $k \geq 0$, that
		\begin{equation} \label{L:Technical1:1}
			a_{k + 1} \leq a_{k} - t_{k}b_{k} + \frac{C}{2}t_{k}^{2},
		\end{equation}
	 	where $0 \leq a_{k} \leq b_{k}$, $t_{k} = 2/(k + 2)$ and $C > 0$, then
		\begin{itemize}
			\item[(a)] $a_{k} \leq 2C/\left(k + 1\right)$;
			\item[(b)] For any $n \geq 1$, we have
			\begin{equation*}
				\min_{k = \lfloor n/2 \rfloor, \ldots , n} b_{k} \leq \frac{8C}{n}.
			\end{equation*}
		\end{itemize}
	\end{lemma}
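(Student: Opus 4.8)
The plan is to establish the two parts by different mechanisms: (a) through a contraction-plus-induction argument, and (b) by summing the recursion and telescoping.

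For part (a), the first move is to discard $b_k$ in favor of $a_k$. Since $a_k \leq b_k$, inequality \eqref{L:Technical1:1} weakens to a closed recursion in $a_k$ alone,
\[
	a_{k+1} \leq a_k - t_k a_k + \frac{C}{2}t_k^2 = (1-t_k)a_k + \frac{C}{2}t_k^2 .
\]
Substituting $t_k = 2/(k+2)$ gives $1 - t_k = k/(k+2)$ and $\tfrac{C}{2}t_k^2 = 2C/(k+2)^2$, and I would prove $a_k \leq 2C/(k+1)$ by induction on $k \geq 1$. For the base case, note $t_0 = 1$, so evaluating the recursion at $k=0$ together with $b_0 \geq a_0$ yields $a_1 \leq a_0 - b_0 + C/2 \leq C/2 \leq C = 2C/2$. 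For the inductive step, plugging $a_k \leq 2C/(k+1)$ into the weakened recursion and factoring out $2C/(k+2)$ reduces the desired bound $a_{k+1}\leq 2C/(k+2)$ to the elementary inequality $\frac{k}{k+1} + \frac{1}{k+2} \leq 1$, which holds because $k^2+3k+1 < k^2+3k+2$.

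For part (b) I would instead keep $b_k$ and rearrange \eqref{L:Technical1:1} into $t_k b_k \leq a_k - a_{k+1} + \frac{C}{2}t_k^2$. Summing over $k = m, m+1, \ldots, n$ with $m = \lfloor n/2 \rfloor$, the $a_k$ terms telescope, and dropping the nonnegative $a_{n+1}$ gives
\[
	\sum_{k=m}^{n} t_k b_k \leq a_m + \frac{C}{2}\sum_{k=m}^{n} t_k^2 .
\]
Writing $b^\ast = \min_{k=m,\ldots,n} b_k$, I would lower-bound the left-hand side by $b^\ast \sum_{k=m}^{n} t_k$, so that $b^\ast \leq \big(a_m + \tfrac{C}{2}\sum_{k=m}^n t_k^2\big)\big/\sum_{k=m}^n t_k$.

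It then remains to estimate three elementary sums, and here the only real delicacy is the bookkeeping with the floor function, which is what forces the constant $8$. Applying part (a) at the index $m = \lfloor n/2\rfloor$, which satisfies $m+1 \geq (n+1)/2$ for both parities of $n$, gives $a_m \leq 4C/(n+1)$. For the quadratic sum I would telescope $\sum_{k=m}^n t_k^2 = 4\sum_{k=m}^n (k+2)^{-2} \leq 4\sum_{k=m}^n\big(\tfrac{1}{k+1}-\tfrac{1}{k+2}\big) \leq 4/(m+1) \leq 8/(n+1)$, so the full numerator is at most $8C/(n+1)$. For the denominator the cleanest route avoids the logarithmic integral estimate entirely: since $t_k \geq 2/(n+2)$ for every $k \leq n$ and the number of summands is $n-m+1 \geq (n+2)/2$, one gets $\sum_{k=m}^n t_k \geq 1$. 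Combining these three bounds yields $b^\ast \leq 8C/(n+1) \leq 8C/n$, as claimed. The main obstacle is thus not any single inequality but verifying that the two floor estimates $m+1 \geq (n+1)/2$ and $n-m+1 \geq (n+2)/2$ hold uniformly in $n$, since these are precisely what make the final constant come out to be exactly $8$.
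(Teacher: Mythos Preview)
The paper does not actually prove this lemma; it simply recalls it from \cite[Lemma~1]{B2012}. Your argument is the standard one and is correct in all essentials.

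One small caveat worth noting: you prove part~(a) by induction for $k\geq 1$, which is the right choice since the hypotheses place no upper bound on $a_0$ (indeed $a_0\leq 2C$ can fail, e.g.\ take $a_0=b_0=100C$). But then your part~(b) argument invokes part~(a) at the index $m=\lfloor n/2\rfloor$, and for $n=1$ this is $m=0$, which you have not covered. This edge case is easily patched directly: from $a_1\leq C/2$ and $0\leq a_2\leq a_1-\tfrac{2}{3}b_1+\tfrac{2C}{9}$ one gets $b_1\leq \tfrac{13C}{12}<8C$. Alternatively, simply restrict the statement of~(b) to $n\geq 2$, which is all that is used downstream.
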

 	We are now ready to establish the sublinear rate of convergence of the CBCG algorithm with
 	predefined stepsize.
 	\begin{theorem}[Sublinear rate for CBCG with predefined stepsize] \label{T:RatePre}
	 	Let $\Seq{\bx}{k}$ be the sequence generated by the CBCG method with the predefined
		stepsize strategy given in \eqref{pre-defined}.  Then, for any $k \geq 0$, we have
		\begin{equation} \label{T:RatePre:1}
			H(\bx^{k}) - H^{\ast} \leq \frac{2C_1}{k + 1},
		\end{equation}
		and, for any $n \geq 1$, we have
		\begin{equation*}
			\min_{k = \lfloor n/2 \rfloor, \ldots , n} S(\bx^{k}) \leq \frac{8C_1}{n},
		\end{equation*}
  		where
		\begin{equation*}
			C_1 = \sum_{i = 1}^{N} \beta_{i}\norm{A_{i}}^{2}D_{i}^{2} + 2L_{F}D\norm{A} \sum_{i =
			1}^{N} D_{i}\norm{A_{i}}.
		\end{equation*}
	\end{theorem}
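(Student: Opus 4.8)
The plan is to reduce the entire statement to the technical sequence Lemma \ref{L:Technical1} via the identifications $a_{k} = H(\bx^{k}) - H^{\ast}$, $b_{k} = S(\bx^{k})$, $t_{k} = \alpha^{k} = 2/(k+2)$, and $C = C_1$. Once the hypotheses of that lemma are matched, parts (a) and (b) deliver the two displayed inequalities directly, so the proof becomes pure bookkeeping against results already in hand.

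First I would verify the driving recursion \eqref{L:Technical1:1}. This is exactly Lemma \ref{L:PreMain}: with the predefined stepsize it reads
\begin{equation*}
	H(\bx^{k + 1}) \leq H(\bx^{k}) - \alpha^{k}S(\bx^{k}) + \frac{C_1}{2}(\alpha^{k})^{2},
\end{equation*}
and subtracting $H^{\ast}$ from both sides turns it into $a_{k+1} \leq a_{k} - t_{k}b_{k} + \tfrac{C}{2}t_{k}^{2}$, since the $S$-term and the quadratic term are untouched. Next I would check the ordering conditions $0 \leq a_{k} \leq b_{k}$. Non-negativity $a_{k} \geq 0$ holds because $H^{\ast}$ is the optimal value of \eqref{mainproblem} and each iterate $\bx^{k}$ stays feasible in $X$ (each update \eqref{CBCG:2} is a convex combination with $\alpha_{i}^{k} \in [0,1]$ of points in the convex set $X$, so $H(\bx^{k}) \geq H^{\ast}$). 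The bound $a_{k} \leq b_{k}$, that is $H(\bx^{k}) - H^{\ast} \leq S(\bx^{k})$, is precisely Lemma \ref{L:UppBoundS}.

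With the hypotheses in place, part (a) of Lemma \ref{L:Technical1} gives $a_{k} \leq 2C_1/(k+1)$, which is \eqref{T:RatePre:1}, and part (b) gives $\min_{k = \lfloor n/2 \rfloor, \ldots, n} S(\bx^{k}) \leq 8C_1/n$, the second claim. There is no genuinely hard step: the substance was already absorbed into Lemma \ref{L:PreMain} (the per-iteration descent with the correct constant $C_1$) and Lemma \ref{L:UppBoundS} (the upper bound tying $S$ to the objective gap). The only point I would be careful about is confirming feasibility of the iterates so that both $H(\bx^{k}) \geq H^{\ast}$ and the very definition of $S(\bx^{k})$ apply; this is immediate from the convexity of each $X_{i}$ and the structure of the CBCG update.
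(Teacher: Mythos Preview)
Your proposal is correct and matches the paper's proof essentially line for line: invoke Lemma \ref{L:PreMain} to obtain the recursion, use Lemma \ref{L:UppBoundS} for $a_k \leq b_k$, and apply Lemma \ref{L:Technical1} with $a_k = H(\bx^k) - H^*$ and $b_k = S(\bx^k)$. The only addition you make is the explicit feasibility check ensuring $a_k \geq 0$, which the paper leaves implicit.
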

	\begin{proof}
		From Lemma \ref{L:PreMain} we have
		\begin{equation*}
			H(\bx^{k + 1}) - H^{\ast} \leq H(\bx^{k}) - H^{\ast} -
			\alpha^{k}S(\bx^{k}) + \frac{C_1}{2}(\alpha^{k})^{2}.
		\end{equation*}
		In addition, by Lemma \ref{L:UppBoundS}, we have that $S(\bx^{k}) \geq
		H(\bx^{k}) - H^{\ast}$. We can therefore invoke Lemma \ref{L:Technical1}
		with $a_{k} = H(\bx^{k}) - H^{\ast}$ and $b_{k} = S(\bx^{k})$,
		and the desired result is established.
	\end{proof}
	
\subsection{Analysis for the Adaptive Stepsize Strategy} \label{SSec:Adap}
	We now turn to the analysis of CBCG with adaptive stepsize. The main insight is to choose
	a stepsize that minimizes the \ed{quadratic} upper bound given in Lemma \ref{L:Descent}. In this setting,
	the adaptive stepsize is defined as follows:
	\begin{align}
		\alpha_{i}^{k} & = \argmin_{\alpha \in \left[0 , 1\right]} \left\{ -\alpha S_{i}
		(\bx^{k,i - 1}) + \alpha^{2} \cdot \frac{\beta_{i}}{2}\|\bba_{i}(
		\bp_{i}^{k} - \bx_{i}^{k})\|^{2} \right\} \nonumber \\
		& = 	\min \left\{ \frac{S_{i}(\bx^{k,i - 1})}{\beta_{i}\|\bba_{i}
		(\bp_{i}^{k} - \bx_{i}^{k})\|^{2}} , 1 \right\}. \label{adaptive}
	\end{align}
	Note that by Lemma \ref{L:Descent}, this choice of stepsize leads to a decrease of
	objective value at each step. The analysis employed for the predefined stepsize does not
	seem to be easily adjusted to the adaptive stepsize rule. Thus, a different analysis is
	developed in this section. We begin with the following technical result.
	\begin{lemma} \label{L:DescentAdap}
		Let $\Seq{\bx}{k}$ be the sequence generated by the CBCG method with the adaptive
		stepsize strategy given in \eqref{adaptive}.  Then, for any $k \geq 0$ and $i \in
		\left\{ 1 , 2 , \ldots , N \right\}$, we have
		\begin{equation*}
			H(\bx^{k,i - 1}) - H(\bx^{k,i}) \geq \frac{\alpha_{i}^{k}}{2}
			S_{i}(\bx^{k,i - 1}) \geq \frac{(\alpha_{i}^{k})^{2}
			\beta_{i}}{2}\|\bba_{i}(\bp_{i}^{k} - \bx_{i}^{k})\|^{2}.
		\end{equation*}
	\end{lemma}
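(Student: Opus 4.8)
The plan is to reduce both inequalities in the statement to a single elementary consequence of the definition of the adaptive stepsize in \eqref{adaptive}, namely that
\begin{equation*}
	\alpha_{i}^{k}\beta_{i}\|\bba_{i}(\bp_{i}^{k} - \bx_{i}^{k})\|^{2} \leq S_{i}(\bx^{k,i - 1}).
\end{equation*}
For transparency I would abbreviate $S := S_{i}(\bx^{k,i - 1}) \geq 0$ and $B := \beta_{i}\|\bba_{i}(\bp_{i}^{k} - \bx_{i}^{k})\|^{2} \geq 0$, so that \eqref{adaptive} reads $\alpha_{i}^{k} = \min\{S/B , 1\}$ and the displayed bound becomes simply $\alpha_{i}^{k} B \leq S$.

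First I would verify this key bound by distinguishing the two branches of the minimum. If the minimum is attained at the interior value $\alpha_{i}^{k} = S/B$, then $\alpha_{i}^{k} B = S$ and the bound holds with equality. If instead $\alpha_{i}^{k} = 1$, which occurs exactly when $S/B \geq 1$, then $\alpha_{i}^{k} B = B \leq S$. Thus $\alpha_{i}^{k} B \leq S$ in all cases. The rightmost inequality of the statement is then immediate: multiplying $\alpha_{i}^{k} B \leq S$ by the nonnegative factor $\alpha_{i}^{k}/2$ gives
\begin{equation*}
	\frac{(\alpha_{i}^{k})^{2}\beta_{i}}{2}\|\bba_{i}(\bp_{i}^{k} - \bx_{i}^{k})\|^{2} \leq \frac{\alpha_{i}^{k}}{2}S_{i}(\bx^{k,i - 1}).
\end{equation*}

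For the leftmost inequality I would invoke Lemma \ref{L:Descent}, rearranged into the descent form
\begin{equation*}
	H(\bx^{k,i - 1}) - H(\bx^{k,i}) \geq \alpha_{i}^{k}S_{i}(\bx^{k,i - 1}) - \frac{(\alpha_{i}^{k})^{2}\beta_{i}}{2}\|\bba_{i}(\bp_{i}^{k} - \bx_{i}^{k})\|^{2},
\end{equation*}
and bound the subtracted quadratic term from above using the rightmost inequality just established. This replaces $\frac{(\alpha_{i}^{k})^{2}\beta_{i}}{2}\|\bba_{i}(\bp_{i}^{k} - \bx_{i}^{k})\|^{2}$ by $\frac{\alpha_{i}^{k}}{2}S_{i}(\bx^{k,i - 1})$, leaving the lower bound $\alpha_{i}^{k}S_{i}(\bx^{k,i - 1}) - \frac{\alpha_{i}^{k}}{2}S_{i}(\bx^{k,i - 1}) = \frac{\alpha_{i}^{k}}{2}S_{i}(\bx^{k,i - 1})$, which is exactly the claimed left inequality.

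I do not anticipate any genuine obstacle: the conceptual heart of the argument is the observation that the clipped minimizer $\alpha_{i}^{k} = \min\{S/B , 1\}$ always satisfies $\alpha_{i}^{k} B \leq S$, and once this is in hand both bounds follow by direct substitution into the descent estimate of Lemma \ref{L:Descent}. Everything uses only the nonnegativity of $S_{i}(\bx^{k,i - 1})$ and of the quadratic term, so no delicate estimation is required. The only mild care needed is the degenerate case $B = 0$, in which the min formula is read through its defining argmin and the bound $\alpha_{i}^{k} B \leq S$ holds trivially.
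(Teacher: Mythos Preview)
Your proposal is correct and follows essentially the same approach as the paper: both rely on Lemma~\ref{L:Descent} and the two-case distinction built into the clipped stepsize \eqref{adaptive}. Your version is a mild streamlining---you isolate the single inequality $\alpha_{i}^{k}\beta_{i}\|\bba_{i}(\bp_{i}^{k}-\bx_{i}^{k})\|^{2}\leq S_{i}(\bx^{k,i-1})$ once and derive both claimed bounds from it, whereas the paper carries the case split through the full chain of inequalities---but the underlying argument is the same.
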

	\begin{proof}
		We split the proof into two cases. First, if
		\begin{equation} \label{L:DescentAdap:1}  		
			\frac{S_{i}(\bx^{k,i - 1})}{\beta_{i}\|\bba_{i}(\bp_{i}^{k} -
			\bx_{i}^{k})\|^{2}} \geq 1,
		\end{equation}
		then $\alpha_{i}^{k} = 1$, and by Lemma \ref{L:Descent} we have
		\begin{align}
			H(\bx^{k,i - 1}) - H(\bx^{k,i}) & \geq \alpha_{i}^{k}
			S_{i}(\bx^{k,i - 1}) - \frac{(\alpha_{i}^{k})^{2}\beta_{i}}
			{2}\|\bba_{i}(\bp_{i}^{k} - \bx_{i}^{k})\|^{2} \nonumber \\
&= S_i(\bx^{k,i-1})-\frac{\beta_i}{2}\|\bba_i (\bx^k_i-\bp^k_i)\|^2\\
&\geq \frac{\alpha^k_i}{2}S_i(\bx^{k,i-1})\geq \frac{(\alpha^k_i)^2\beta_i}{2}\|\bba_i (\bx^k_i-\bp^k_i)\|^2,\label{L:DescentAdap:2}
		\end{align}
where the last two inequalities follows from (\ref{L:DescentAdap:1}) and the fact that $\alpha^k_i=1$.  In the second case, when $S_{i}(\bx^{k,i - 1}) <
		\beta_{i}\|\bba_{i}(\bp_{i}^{k} - \bx_{i}^{k})\|^{2}$, we have that
		\begin{equation*}
			\alpha_{i}^{k} = \frac{S_{i}(\bx^{k,i - 1})}{\beta_{i}\|\bba_{i}
			(\bp_{i}^{k} - \bx_{i}^{k})\|^{2}}.
		\end{equation*}
		Thus,
		\begin{align}
			H(\bx^{k,i - 1}) - H(\bx^{k,i}) & \geq \alpha_{i}^{k}
			S_{i}(\bx^{k,i - 1}) - \frac{(\alpha_{i}^{k})^{2}\beta_{i}}
			{2}\|\bba_{i}(\bp_{i}^{k} - \bx_{i}^{k})\|^{2} \nonumber \\
			& = \frac{S_{i}(\bx^{k,i - 1})^{2}}{2\beta_{i}\|\bba_{i}(\bp_{i}
			^{k} - \bx_{i}^{k})\|^{2}} \nonumber \\
			& = \frac{\alpha_i^k}{2}S_i(\bx^{k,i-1})=\frac{(\alpha_{i}^{k})^{2}\beta_{i}}{2}\|\bba_{i}(\bp_{i}^{k}
			- \bx_{i}^{k})\|^{2}. \label{L:DescentAdap:3}
		\end{align}		
		The result now follows by combining \eqref{L:DescentAdap:2} and
		\eqref{L:DescentAdap:3}.
	\end{proof}
	We can now prove the following important result.
	\begin{lemma} \label{L:DescentAdap2}
		Let $\Seq{\bx}{k}$ be the sequence generated by the CBCG method with the adaptive
		stepsize strategy given in \eqref{adaptive}. Then, for any $k \geq 0$ and $i \in
		\left\{ 1 , 2 , \ldots , N \right\}$, we have
		\begin{equation*}
			H(\bx^{k,i - 1}) - H(\bx^{k,i}) \geq \frac{S_{i}
			(\bx^{k,i - 1})^{2}}{2\max \left\{ \beta_{i}\|\bba_i\|^{2} D_{i}^{2} ,
			K_{i} \right\}},
		\end{equation*}
		where, for each $i = 1 , 2 , \ldots , N$,
		\begin{align}
			\label{eq:defKi}
			M_{i} &= \max_{\bx \in X} \norm{\nabla_{i} f(\bx)},\\
			K_{i} &= (M_{i} + l_{i})D_{i}
		\end{align}
	\end{lemma}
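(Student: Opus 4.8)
The plan is to invoke Lemma~\ref{L:DescentAdap} and then handle separately the two regimes of the adaptive stepsize in \eqref{adaptive}, mirroring the definition of $\alpha_i^k$.

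First I would treat the unclipped case, where $\alpha_i^k = S_i(\bx^{k,i - 1})/(\beta_i\|\bba_i(\bp_i^k - \bx_i^k)\|^2) < 1$. Here Lemma~\ref{L:DescentAdap} already gives $H(\bx^{k,i - 1}) - H(\bx^{k,i}) \geq \tfrac{\alpha_i^k}{2}S_i(\bx^{k,i - 1}) = S_i(\bx^{k,i - 1})^2/(2\beta_i\|\bba_i(\bp_i^k - \bx_i^k)\|^2)$. To make the denominator iterate-independent I would use that both $\bp_i^k$ and $\bx_i^k$ lie in $X_i$ to obtain $\|\bba_i(\bp_i^k - \bx_i^k)\| \leq \norm{\bba_i}\,\|\bp_i^k - \bx_i^k\| \leq \norm{\bba_i}D_i$, so the right-hand side is at least $S_i(\bx^{k,i - 1})^2/(2\beta_i\norm{\bba_i}^2 D_i^2)$, which in turn dominates $S_i(\bx^{k,i - 1})^2/(2\max\{\beta_i\norm{\bba_i}^2D_i^2, K_i\})$.

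Next I would treat the clipped case $\alpha_i^k = 1$, where Lemma~\ref{L:DescentAdap} yields only the linear bound $H(\bx^{k,i - 1}) - H(\bx^{k,i}) \geq \tfrac{1}{2}S_i(\bx^{k,i - 1})$. To convert this into the claimed quadratic form it suffices to establish the uniform estimate $S_i(\bx^{k,i - 1}) \leq K_i$, since then $\tfrac{1}{2}S_i(\bx^{k,i - 1}) \geq S_i(\bx^{k,i - 1})^2/(2K_i) \geq S_i(\bx^{k,i - 1})^2/(2\max\{\beta_i\norm{\bba_i}^2D_i^2, K_i\})$. This bound on $S_i$ is the heart of the argument, and it is precisely what motivates the constant $K_i$. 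Starting from \eqref{FactOptMe}, I would bound the two contributions to $S_i(\bx)$ separately: by Cauchy--Schwarz and the definition \eqref{eq:defKi} of $M_i$, together with $\|\bx_i - p_i(\bx)\| \leq D_i$ (both points lie in $X_i$), the linear term satisfies $\act{\nabla_i f(\bx), \bx_i - p_i(\bx)} \leq M_i D_i$; by the Lipschitz continuity of $g_i$ with constant $l_i$ from Assumption~\ref{AssumptionA}(i), the term $g_i(\bx_i) - g_i(p_i(\bx))$ is at most $l_i D_i$. Summing, $S_i(\bx) \leq (M_i + l_i)D_i = K_i$ for every $\bx \in X$.

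Combining the two cases then yields the stated inequality for every $k \geq 0$ and $i \in \{1, 2, \ldots, N\}$. The only nontrivial step is the uniform bound $S_i(\bx) \leq K_i$, but it reduces to Cauchy--Schwarz, the compactness of $X_i$, and the Lipschitz property of $g_i$, so I expect no real obstacle.
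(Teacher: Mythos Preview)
Your proposal is correct and follows essentially the same approach as the paper's proof: split into the two stepsize regimes, use Lemma~\ref{L:DescentAdap} in each, bound the denominator by $\beta_i\|\bba_i\|^2D_i^2$ in the unclipped case, and in the clipped case convert the linear bound into a quadratic one via the uniform estimate $S_i(\bx)\leq (M_i+l_i)D_i=K_i$ obtained exactly as you describe. The only cosmetic difference is the order in which the two cases are treated.
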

	\begin{proof}
		We again split the proof into two cases. First, if $\alpha_{i}^{k} = 1$, then by Lemma
		\ref{L:DescentAdap} we have
		\begin{equation} \label{L:DescentAdap2:1}
			H(\bx^{k,i - 1}) - H(\bx^{k,i}) \geq \frac{1}{2}S_{i}
			(\bx^{k,i - 1}) \geq \frac{S_{i}(\bx^{k,i - 1})^{2}}{2K_{i}},
		\end{equation}
		where the last inequality follows from the fact that for any $\bx \in X$:
		\begin{align}
			S_{i}(\bx) & = \act{\nabla_{i} f(\bx) , \bx_{i} - \bp_{i}
			(\bx)} + g_{i}(\bx_{i}) - g_{i}(\bp_{i}(\bx)
			) \nonumber\\
			& \leq \norm{\nabla_{i} f(\bx)} \cdot \norm{\bx_{i} - \bp_{i}(\bx
			)} + l_{i}\norm{\bx_{i} - \bp_{i}(\bx)} \nonumber \\
			& \leq (M_{i} + l_{i})D_{i}. \label{L:DescentAdap2:2}
		\end{align}
		In the second case, when $\alpha_{i}^{k} < 1$, we have that	$\alpha_{i}^{k} = \frac{S_{i}(\bx^{k,i - 1})}{\beta_{i}\|\bba_{i}
			(\bp_{i}^{k} - \bx_{i}^{k})\|^{2}}$, and by Lemma \ref{L:DescentAdap} we can write
		\begin{equation} \label{L:DescentAdap2:3}
			H(\bx^{k,i - 1}) - H(\bx^{k,i}) \geq \frac{S_{i}
			(\bx^{k,i - 1})^{2}}{2\beta_{i}\|\bba_{i}(\bp_{i}^{k} - \bx_{i}
			^{k})\|^{2}} \geq \frac{S_{i}(\bx^{k,i - 1})^{2}}{2\beta_{i}
			\|\bba_i\|^{2}D_{i}^{2}},
		\end{equation}
		The result now follows by combining \eqref{L:DescentAdap2:2} and
		\eqref{L:DescentAdap2:3}.
	\end{proof}
	We will now prove an additional technical lemma that establishes a ``sufficient decrease"
	property of the CBCG method with adaptive stepsize.
	\begin{lemma} \label{L:Suff}
		Let $\Seq{\bx}{k}$ be the sequence generated by the CBCG method with the adaptive
		stepsize strategy given in \eqref{adaptive}. Then, for any $k \geq 0$ and $i \in
		\left\{ 1 , 2 , \ldots , N \right\}$, we have
		\begin{equation*}
			H(\bx^{k}) - H(\bx^{k + 1}) \geq \frac{\beta_{min}}{2N }
			\|\bba (\bx^{k} - \bx^{k,i-1})\|^{2},
		\end{equation*}
		where $\beta_{\min}$ is given by \eqref{def_beta_min}.
	\end{lemma}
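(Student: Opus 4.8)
The plan is to telescope the per-block decrease established in Lemma~\ref{L:DescentAdap} and then to relate the resulting sum to the quantity $\|\bba(\bx^{k} - \bx^{k,i-1})\|^{2}$ through a single application of the Cauchy--Schwarz inequality. Concretely, since $\bx_{j}^{k,j-1} = \bx_{j}^{k}$ (see~\eqref{BlockUpdate}), Lemma~\ref{L:DescentAdap} gives for every $j = 1, 2, \ldots, N$ the lower bound
\begin{equation*}
	H(\bx^{k,j-1}) - H(\bx^{k,j}) \geq \frac{(\alpha_{j}^{k})^{2}\beta_{j}}{2}\|\bba_{j}(\bp_{j}^{k} - \bx_{j}^{k})\|^{2} \geq \frac{(\alpha_{j}^{k})^{2}\beta_{\min}}{2}\|\bba_{j}(\bp_{j}^{k} - \bx_{j}^{k})\|^{2},
\end{equation*}
where the second inequality uses $\beta_{j} \geq \beta_{\min}$ (see~\eqref{def_beta_min}). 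Summing over $j = 1, 2, \ldots, N$ and using $\bx^{k,0} = \bx^{k}$ and $\bx^{k,N} = \bx^{k+1}$, the left-hand side telescopes and yields
\begin{equation*}
	H(\bx^{k}) - H(\bx^{k+1}) \geq \frac{\beta_{\min}}{2}\sum_{j=1}^{N} (\alpha_{j}^{k})^{2}\|\bba_{j}(\bp_{j}^{k} - \bx_{j}^{k})\|^{2}.
\end{equation*}

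Next I would make the vector $\bba(\bx^{k} - \bx^{k,i-1})$ explicit. Iterating the update rule~\eqref{CBCG:2} from $\bx^{k,0} = \bx^{k}$ and again using $\bx_{j}^{k,j-1} = \bx_{j}^{k}$, one obtains $\bx^{k,i-1} - \bx^{k} = \sum_{j=1}^{i-1} \alpha_{j}^{k}\bbu_{j}(\bp_{j}^{k} - \bx_{j}^{k})$. Applying $\bba$ and recalling $\bba_{j} = \bba\bbu_{j}$ then gives
\begin{equation*}
	\bba(\bx^{k} - \bx^{k,i-1}) = -\sum_{j=1}^{i-1} \alpha_{j}^{k}\bba_{j}(\bp_{j}^{k} - \bx_{j}^{k}).
\end{equation*}

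The key step, and really the only place where the factor $N$ enters, is to bound the norm of this partial sum. By the triangle inequality followed by the Cauchy--Schwarz inequality applied to the at most $N$ summands,
\begin{equation*}
	\|\bba(\bx^{k} - \bx^{k,i-1})\|^{2} \leq (i-1)\sum_{j=1}^{i-1} (\alpha_{j}^{k})^{2}\|\bba_{j}(\bp_{j}^{k} - \bx_{j}^{k})\|^{2} \leq N\sum_{j=1}^{N} (\alpha_{j}^{k})^{2}\|\bba_{j}(\bp_{j}^{k} - \bx_{j}^{k})\|^{2},
\end{equation*}
where in the last inequality I used $i - 1 \leq N$ and extended the summation to all $N$ terms, each of which is nonnegative. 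Combining this with the telescoped decrease from the first paragraph immediately yields $H(\bx^{k}) - H(\bx^{k+1}) \geq \frac{\beta_{\min}}{2N}\|\bba(\bx^{k} - \bx^{k,i-1})\|^{2}$, as desired. I do not anticipate any genuine obstacle; the only points requiring a little care are to route the per-block bound through $\beta_{\min}$ so that the estimate is uniform in $j$, and to observe that the number of terms in the partial sum never exceeds $N$, which is exactly what produces the constant $2N$ in the denominator. The case $i=1$ is trivially consistent, since then both sides vanish except for the descent property $H(\bx^{k}) \geq H(\bx^{k+1})$.
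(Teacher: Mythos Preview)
Your proof is correct and follows essentially the same approach as the paper: both use the per-block decrease from Lemma~\ref{L:DescentAdap}, the inequality $\|\sum_{j} \bu_j\|^2 \leq N\sum_{j}\|\bu_j\|^2$, and a telescoping sum. The only cosmetic difference is the order of operations---the paper bounds $\|\bba(\bx^{k}-\bx^{k,i-1})\|^2$ first and telescopes over $j=1,\ldots,i-1$ before extending to $H(\bx^{k})-H(\bx^{k+1})$ via monotonicity, whereas you telescope over all $N$ blocks first and then bound the partial sum by the full sum.
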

	\begin{proof}
		By Lemma \ref{L:DescentAdap} we have
		\begin{equation} \label{L:Suff:1}
			H(\bx^{k,i - 1}) - H(\bx^{k,i}) \geq \frac{(\alpha_{i}
			^{k})^{2}\beta_{i}}{2}\|\bba_{i}(\bp_{i}^{k} - \bx_{i}^{k})\|^{2}.
		\end{equation}
		Now, for any $i \in \left\{ 1 , 2 , \ldots , N \right\}$, we can write
 		\begin{align*}
			\|\bba(\bx^{k} - \bx^{k,i-1})\|^{2} & = \left \| \sum_{j = 1}^{N} \bba_{j}
			(\bx_{j}^{k} - \bx_{j}^{k,i-1})\right \|^{2} \\
& \leq N\sum_{j = 1}^{N} \|\bba_{j}
			(\bx_{j}^{k} - \bx_{j}^{k,i-1})\|^{2} \\
			& = N\sum_{j = 1}^{i - 1} \|\bba_{j}(\bx_{j}^{k} - \bx_{j}^{k,i - 1})\|
			^{2} \\
&= N\sum_{j = 1}^{i - 1} (\alpha_{i}^{k})^{2}\|\bba_{j}
			(\bx_{j}^{k} - \bp_{j}^{k})\|^{2} \\
			& \leq \frac{2N}{\beta_{min}}\sum_{j = 1}^{i - 1} (H(\bx^{k,j - 1}) -
			H(\bx^{k,j})) \\
			& = \frac{2N}{\beta_{min}}(H(\bx^{k}) - H(\bx^{k,i - 1})
			) \\
			& \leq \frac{2N}{\beta_{min}}(H(\bx^{k}) - H(\bx^{k + 1}
			)),
 		\end{align*}
		where the first inequality follows from the fact that for any $N$ vectors $\bu_{1} ,
		\bu_{2} , \ldots , \bu_{N}$, the inequality $\norm{\sum_{j = 1}^{N} \bu_{j}}^2 \leq N\sum_{j
		= 1}^{N} \norm{\bu_{j}}^{2}$ holds; the second inequality follows from \eqref{L:Suff:1}, and
		the last inequality follows from Lemma \ref{L:DescentAdap}, which shows that the
		sequence of function values is non-increasing.
	\end{proof}
\begin{remark}
	\label{rem:improvedConstant}
	The bound in Lemma \ref{L:Suff} can be improved in some situations where additional information on the structure of $\bba$ is available. For example, if the column space of each $\bba_i$ span orthogonal spaces, that is $\bba_i^T \bba_j = \bo$ for any $1 \leq i < j \leq N$, then the factor $N$ can be avoided.
\end{remark}
	The next lemma constitutes the crucial step towards the establishment of a sublinear convergence rate of
	the CBCG method with adaptive stepsize.
	\begin{lemma} \label{L:MainAda}
		Let $\Seq{\bx}{k}$ be the sequence generated by the CBCG method with the adaptive
		stepsize strategy given in \eqref{adaptive}. Then, for any $k \geq 0$, we have
		\begin{equation*}
			H(\bx^{k}) - H(\bx^{k + 1}) \geq \frac{1}{NC_2}S(\bx^{k}
			)^{2},
		\end{equation*}
		where
		\begin{equation} \label{L:MainAda:1}
			C_2 = 4\left [\max_{i = 1 , 2 , \ldots , N}\left\{\max \left\{ \beta_{i}\norm{A_{i}}^{2}
			D_{i}^{2} , K_{i} \right\} \right\} + \frac{NL_{F}^{2}D^{2}\max_{i = 1 , 2 , \ldots , N}
			\norm{A_{i}}^{2}}{\beta_{\min}}\right ],
		\end{equation}
		and $\beta_{\min}$ is defined in \eqref{def_beta_min} while $K_i$ is defined in \eqref{eq:defKi}, for $i = 1, 2, \ldots, N$.
	\end{lemma}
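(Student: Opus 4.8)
The plan is to bound $S(\bx^k)$ from above by a multiple of the total decrease $H(\bx^k)-H(\bx^{k+1})$, exploiting the additive structure $S(\bx^k)=\sum_{i=1}^N S_i(\bx^k)$ from \eqref{sum_S}. The natural starting point is the telescoping identity
\[
H(\bx^k)-H(\bx^{k+1})=\sum_{i=1}^N\left(H(\bx^{k,i-1})-H(\bx^{k,i})\right),
\]
which accounts for the full decrease block by block. The principal difficulty is that the per-block decrease estimate of Lemma \ref{L:DescentAdap2} is phrased through $S_i(\bx^{k,i-1})$, evaluated at the \emph{intermediate} iterate, whereas the quantity I want to control is $S(\bx^k)=\sum_i S_i(\bx^k)$, evaluated at $\bx^k$. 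Reconciling these two points of evaluation is where the real work lies.

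First I would apply Cauchy--Schwarz to obtain $S(\bx^k)^2\le N\sum_{i=1}^N S_i(\bx^k)^2$, reducing the task to bounding each $S_i(\bx^k)^2$. Since $\bx^k$ and $\bx^{k,i-1}$ agree on the $i$th block (see \eqref{BlockUpdate}), Lemma \ref{L:LipSi} applies and gives
\[
S_i(\bx^k)\le S_i(\bx^{k,i-1})+L_F D_i\norm{\bba_i}\cdot\norm{\bba(\bx^k-\bx^{k,i-1})},
\]
so that the elementary inequality $(a+b)^2\le 2a^2+2b^2$ yields
\[
S_i(\bx^k)^2\le 2\,S_i(\bx^{k,i-1})^2+2L_F^2 D_i^2\norm{\bba_i}^2\,\norm{\bba(\bx^k-\bx^{k,i-1})}^2.
\]
This cleanly separates a ``main'' term at the intermediate iterate from an ``error'' term measuring how far $\bx^{k,i-1}$ has drifted from $\bx^k$.

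Then I would dispatch the two terms using the two decrease lemmas already established. Summing the main term over $i$ and invoking Lemma \ref{L:DescentAdap2} gives $\sum_i S_i(\bx^{k,i-1})^2\le 2P\sum_i\left(H(\bx^{k,i-1})-H(\bx^{k,i})\right)=2P\left(H(\bx^k)-H(\bx^{k+1})\right)$, where $P=\max_i\max\{\beta_i\norm{\bba_i}^2 D_i^2,K_i\}$ is precisely the first term appearing in $C_2$. For the error term, Lemma \ref{L:Suff} bounds each $\norm{\bba(\bx^k-\bx^{k,i-1})}^2$ by $\tfrac{2N}{\beta_{\min}}\left(H(\bx^k)-H(\bx^{k+1})\right)$; summing and using $\sum_i D_i^2\norm{\bba_i}^2\le D^2\max_i\norm{\bba_i}^2$ (recall $D^2=\sum_i D_i^2$ from \eqref{sumd}) produces exactly the second term of $C_2$. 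Combining the two estimates gives $\sum_i S_i(\bx^k)^2\le C_2\left(H(\bx^k)-H(\bx^{k+1})\right)$, and the opening Cauchy--Schwarz step then delivers $S(\bx^k)^2\le NC_2\left(H(\bx^k)-H(\bx^{k+1})\right)$, which is the claim after rearranging. The only genuinely delicate point is the bookkeeping that transfers the intermediate-iterate quantities back to $\bx^k$; once the Lipschitz estimate of Lemma \ref{L:LipSi} and the sufficient-decrease estimate of Lemma \ref{L:Suff} are brought to bear, the remaining manipulations are routine.
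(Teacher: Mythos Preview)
Your proposal is correct and follows essentially the same route as the paper: split $S_i(\bx^k)^2$ via $(a+b)^2\le 2a^2+2b^2$ and Lemma~\ref{L:LipSi}, handle the main term with Lemma~\ref{L:DescentAdap2} and telescoping, handle the drift term with Lemma~\ref{L:Suff} together with $\sum_i D_i^2\norm{\bba_i}^2\le D^2\max_i\norm{\bba_i}^2$, and close with the Cauchy--Schwarz bound $S(\bx^k)^2\le N\sum_i S_i(\bx^k)^2$. The only cosmetic difference is that the paper applies Cauchy--Schwarz at the end rather than at the outset.
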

	\begin{proof}
		For any $i \in \left\{ i = 1 , 2 , \ldots , N \right\}$ we have that
		\begin{align}
			S_{i}(\bx^{k})^{2} & = (S_{i}(\bx^{k,i - 1}) + S_{i}
			(\bx^{k}) - S_{i}(\bx^{k,i - 1}))^{2} \nonumber \\
			& \leq 2S_{i}(\bx^{k,i - 1})^{2} + 2(S_{i}(\bx^{k}) -
			S_{i}(\bx^{k,i - 1}))^{2} \nonumber \\
			& \leq 2S_{i}(\bx^{k,i - 1})^{2} + 2L_{F}^{2}D_{i}^{2}\|\bba_{i}\|^{2}
			\cdot \|\bba(\bx^{k} - \bx^{k,i - 1})\|^{2} \nonumber \\
			& \leq 2S_{i}(\bx^{k,i - 1})^{2} + \frac{4N L_{F}^{2}D_{i}^{2}\|\bba_{i}
			\|^{2}}{\beta_{\min}}(H(\bx^{k}) - H(\bx^{k + 1})
			), \label{L:MainAda:2}
		\end{align}
		where the second inequality follows from Lemma \ref{L:LipSi} and the last inequality
		follows from Lemma \ref{L:Suff}. Invoking Lemma \ref{L:DescentAdap2}, we obtain from
		\eqref{L:MainAda:2} that
		\begin{align}
			S_{i}(\bx^{k})^{2} & \leq 2S_{i}(\bx^{k,i - 1})^{2} +
			\frac{4N L_{F}^{2}D_{i}^{2}\|\bba_i\|^{2}}{\beta_{\min}}(H(\bx^{k})
			- H(\bx^{k + 1})
			) \nonumber \\
			& \leq 4\max\left\{ \beta_{i}\|\bba_i\|^{2}D_{i}^{2} , K_{i} \right\}(H
			(\bx^{k,i - 1}) - H(\bx^{k,i})) + \frac{4NL_{F}^{2}D_{i}
			^{2}\|\bba_i\|^{2}}{\beta_{\min}}(H(\bx^{k}) - H(\bx^{k + 1}
			)). \label{L:MainAda:3}
		\end{align}
		Summing \eqref{L:MainAda:3} for $i = 1 , 2 , \ldots , N$ yields
 		\begin{align*}
			\sum_{i = 1}^{N} S_{i}(\bx^{k})^{2} & \leq 4\sum_{i = 1}^{N} \max\left\{
			\beta_{i}\|\bba_i\|^{2}D_{i}^{2} , K_{i} \right\}(H(\bx^{k,i - 1}
			) - H(\bx^{k,i})) \\
			& + \frac{4N L_{F}^{2}}{\beta_{\min}}\sum_{i = 1}^{N} D_{i}^{2}\|\bba_i\|^{2}
			(H(\bx^{k}) - H(\bx^{k + 1})) \\
			& \leq 4\max_{i = 1 , 2 , \ldots , N} \max\left\{ \beta_{i}\|\bba_i\|^{2}D_{i}
			^{2} , K_{i} \right\} \sum_{i = 1}^{N} (H(\bx^{k,i - 1}) - H
			(\bx^{k,i})) \\
			& + \frac{4NL_{F}^{2}D^{2}\max_{i = 1 , 2 , \ldots , N} \|\bba_i\|^{2}}
			{\beta_{\min}}(H(\bx^{k}) - H(\bx^{k + 1})) \\
			& = C_2(H(\bx^{k}) - H(\bx^{k + 1})).
		\end{align*}
		Finally, using \eqref{sum_S} we have
 		\begin{equation*}
			S(\bx^{k})^{2} = \left [\sum_{i = 1}^{N} S_{i}(\bx^{k})
			\right ]^{2} \leq N\sum_{i = 1}^{N} S_{i}(\bx^{k})^{2}\leq NC_2(H
			(\bx^{k}) - H(\bx^{k + 1})),
		\end{equation*}
		which proves the desired result.
	\end{proof}
	By combining Lemma \ref{L:UppBoundS} with Lemma \ref{L:MainAda}, we obtain the following
	corollary.
	\begin{corollary} \label{C:Suff}
		Let $\Seq{\bx}{k}$ be the sequence generated by the CBCG method with the adaptive
		stepsize strategy given in \eqref{adaptive}. Then, for any $k \geq 0$, we have
		\begin{equation*}
			H(\bx^{k}) - H(\bx^{k + 1}) \geq \frac{1}{NC_2}(H
			(\bx^{k}) - H^*)^2,
		\end{equation*}
		where $C_2$ is given in \eqref{L:MainAda:1}.
	\end{corollary}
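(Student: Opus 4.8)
The plan is to derive this directly by chaining together the two results that immediately precede it, namely Lemma \ref{L:MainAda} and Lemma \ref{L:UppBoundS}. Since the corollary is stated as ``By combining Lemma \ref{L:UppBoundS} with Lemma \ref{L:MainAda},'' I expect the proof to be essentially a one-line substitution, with the only subtlety being the justification for squaring an inequality.

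First I would recall from Lemma \ref{L:MainAda} that, for the adaptive stepsize, the per-iteration decrease is bounded below in terms of the optimality measure:
\begin{equation*}
	H(\bx^{k}) - H(\bx^{k + 1}) \geq \frac{1}{NC_2}S(\bx^{k})^{2}.
\end{equation*}
Next I would invoke Lemma \ref{L:UppBoundS}, which gives $S(\bx^{k}) \geq H(\bx^{k}) - H^{\ast}$. The single point requiring care is that one cannot blindly square an inequality; one must verify that both sides are non-negative. This holds here because $H^{\ast}$ is the optimal value of problem \eqref{mainproblem}, so $H(\bx^{k}) - H^{\ast} \geq 0$, and $S(\bx^{k}) \geq 0$ by the discussion following \eqref{D:OptMea}. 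Since the function $t \mapsto t^2$ is monotone on the non-negative reals, the bound $S(\bx^{k}) \geq H(\bx^{k}) - H^{\ast} \geq 0$ yields $S(\bx^{k})^{2} \geq (H(\bx^{k}) - H^{\ast})^{2}$.

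Combining these two observations gives
\begin{equation*}
	H(\bx^{k}) - H(\bx^{k + 1}) \geq \frac{1}{NC_2}S(\bx^{k})^{2} \geq \frac{1}{NC_2}\left(H(\bx^{k}) - H^{\ast}\right)^{2},
\end{equation*}
which is exactly the claimed inequality. I do not anticipate any genuine obstacle in this proof: the heavy lifting has already been done in establishing Lemma \ref{L:MainAda} (which itself rests on the sufficient-decrease estimate of Lemma \ref{L:Suff} and the block bound of Lemma \ref{L:DescentAdap2}), so the corollary is purely a matter of substitution together with the elementary non-negativity check needed to legitimize the squaring step.
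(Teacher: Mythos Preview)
Your proposal is correct and matches the paper's approach exactly: the corollary is stated in the paper as an immediate consequence of combining Lemma~\ref{L:UppBoundS} with Lemma~\ref{L:MainAda}, with no further argument given. Your explicit justification of the squaring step via non-negativity is the only detail one might add, and it is entirely appropriate.
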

	In order to get rate of convergence result in this case we will need the following
	technical lemma (\cf \cite[Lemma 3.5]{BTet2013}).
	\begin{lemma} \label{L:Technical2}
 		Let $\seq{a}{k}$ be a sequence of non-negative real numbers satisfying, for any $k \in
 		\nn$, the following property
		\begin{equation} \label{L:Technical2:1}
			a_{k} - a_{k + 1} \geq \gamma a_{k}^{2},
		\end{equation}
		and $a_{0} \leq 1/(\gamma m)$ for some positive $\gamma$ and $m$. Then, for
		any $k \in \nn$, we have that
		\begin{equation*}
			a_{k} \leq \frac{1}{\gamma} \cdot \frac{1}{m + k}.
		\end{equation*}
	\end{lemma}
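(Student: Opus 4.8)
The plan is to convert the quadratic recurrence into a \emph{linear} recurrence for the reciprocals $1/a_k$ and then telescope. First I would record the monotonicity that the hypothesis forces: since $a_k \geq 0$ and $\gamma > 0$, the inequality $a_{k+1} \leq a_k - \gamma a_k^2 \leq a_k$ shows that $\{a_k\}$ is non-increasing. I would also dispose of the degenerate situation at the outset: if $a_j = 0$ for some index $j$, then the recurrence gives $a_{j+1} \leq a_j - \gamma a_j^2 = 0$, so by non-negativity $a_k = 0$ for all $k \geq j$, and the claimed bound holds trivially for those indices (the right-hand side being strictly positive). Hence it suffices to establish the estimate on the initial segment of indices where $a_k > 0$.

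On that segment, assuming $a_k > 0$ and $a_{k+1} > 0$, I would divide the hypothesis $a_k - a_{k+1} \geq \gamma a_k^2$ by the positive quantity $a_k a_{k+1}$ to obtain
\begin{equation*}
	\frac{1}{a_{k+1}} - \frac{1}{a_k} \geq \gamma \cdot \frac{a_k}{a_{k+1}}.
\end{equation*}
The key observation is that monotonicity gives $a_k / a_{k+1} \geq 1$, so the right-hand side is at least $\gamma$, yielding the clean linear bound
\begin{equation*}
	\frac{1}{a_{k+1}} - \frac{1}{a_k} \geq \gamma
\end{equation*}
for every relevant $k$.

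Summing this telescoping inequality from $0$ to $k-1$ gives $1/a_k - 1/a_0 \geq \gamma k$, and therefore $1/a_k \geq 1/a_0 + \gamma k$. Finally, I would invoke the initial-value hypothesis $a_0 \leq 1/(\gamma m)$, which is equivalent to $1/a_0 \geq \gamma m$, to deduce $1/a_k \geq \gamma m + \gamma k = \gamma(m+k)$. Inverting this yields $a_k \leq 1/(\gamma(m+k))$, which is exactly the asserted estimate.

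The only subtle point, and the main obstacle, is the passage from the quadratic recurrence to the linear reciprocal recurrence: one must simultaneously justify the division (positivity of consecutive terms, handled by the degenerate-case reduction above) and exploit monotonicity to replace the factor $a_k/a_{k+1}$ by its lower bound $1$. Everything else is a routine telescoping sum together with the bookkeeping that the initial condition $a_0 \leq 1/(\gamma m)$ supplies precisely the offset $\gamma m$ needed to shift the denominator from $\gamma k$ to $\gamma(m+k)$.
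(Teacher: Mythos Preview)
Your proof is correct. The paper itself does not supply a proof for this lemma but simply cites \cite[Lemma~3.5]{BTet2013}; the reciprocal-telescoping argument you give is the standard one for recurrences of this type and is essentially what that reference contains.
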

Combining Lemma \ref{L:Technical2} along with Corollary  \ref{C:Suff} and Lemma \ref{L:UppBoundS}, establishes the
sublinear rate of convergence of the CBCG method with adaptive stepsize.

	\begin{theorem}[Sublinear rate for CBCG with adaptive stepsize] \label{T:RateAdap}
		Let $\Seq{\bx}{k}$ be the sequence generated by the CBCG method with the adaptive
		stepsize strategy given in \eqref{adaptive}.  Then for any $k \geq 0$ we have
		\begin{equation} \label{T:RateAdap:1}
			H(\bx^{k}) - H^{\ast} \leq \frac{NC_2}{k + 4},
		\end{equation}
		and
		\begin{equation*}
			\min_{n = 0 , 1 , \ldots , k} S(\bx^{n}) \leq \frac{2NC_2}{k + 4},
		\end{equation*}
		where
		\begin{equation*}
			C_2 = 4 \left [\max_{i = 1 , 2 , \ldots , N} \left\{\max \left\{ \beta_{i}\|\bba_i\|^{2}
			D_{i}^{2} , K_{i} \right\}  \right\} + \frac{NL_{F}^{2}D^{2}\max_{i = 1 , 2 , \ldots , N}
			\|\bba_i\|^{2}}{\beta_{\min}}\right ],
		\end{equation*}
		$\beta_{\min}$ is defined in \eqref{def_beta_min} and $K_i$ is defined in (\ref{eq:defKi}), for $i = 1, 2, \ldots, N$.
	\end{theorem}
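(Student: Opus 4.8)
The plan is to handle the two assertions separately, in each case reducing to a technical recursion already in hand. Write $a_k = H(\bx^k) - H^{\ast}$; this sequence is nonnegative, and Corollary \ref{C:Suff} immediately gives the quadratic-decrease inequality $a_k - a_{k+1} \geq \frac{1}{NC_2}a_k^2$ for every $k \geq 0$ (which also makes $\{a_k\}$ nonincreasing). This is exactly hypothesis \eqref{L:Technical2:1} of Lemma \ref{L:Technical2} with $\gamma = 1/(NC_2)$, so to obtain \eqref{T:RateAdap:1} it suffices to verify the initial condition $a_0 \leq 1/(\gamma m)$ with $m = 4$, i.e. $a_0 \leq NC_2/4$; Lemma \ref{L:Technical2} then delivers $a_k \leq \frac{1}{\gamma}\cdot\frac{1}{m+k} = \frac{NC_2}{k+4}$.

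To check $a_0 \leq NC_2/4$ I would combine three earlier facts. Lemma \ref{L:UppBoundS} gives $a_0 \leq S(\bx^0)$; the separability identity \eqref{sum_S} gives $S(\bx^0) = \sum_{i=1}^N S_i(\bx^0) \leq N\max_i S_i(\bx^0)$; and the bound \eqref{L:DescentAdap2:2} gives $S_i(\bx^0) \leq (M_i + l_i)D_i = K_i$. Hence $a_0 \leq N\max_i K_i$. Finally, inspecting the definition \eqref{L:MainAda:1} of $C_2$ shows $C_2/4 \geq \max_i\{\max\{\beta_i\|\bba_i\|^2 D_i^2,\,K_i\}\} \geq \max_i K_i$, the remaining summand in $C_2$ being nonnegative. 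Therefore $a_0 \leq N\max_i K_i \leq NC_2/4$, as needed.

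For the optimality-measure estimate I expect the main difficulty. A naive telescoping of Lemma \ref{L:MainAda} over $\{0,\ldots,k\}$ only yields $\min_n S(\bx^n) = O(1/\sqrt{k})$, since the available budget $a_0 - a_{k+1} \leq a_0$ does not shrink with $k$. The fix is a \emph{windowed} telescoping that reuses the rate just proved. Summing Lemma \ref{L:MainAda} from $n = n_0$ to $k$ and bounding the telescoped sum gives
\begin{equation*}
  (k - n_0 + 1)\left(\min_{n_0 \leq n \leq k} S(\bx^n)\right)^2 \leq \sum_{n=n_0}^k S(\bx^n)^2 \leq NC_2\,(a_{n_0} - a_{k+1}) \leq NC_2\, a_{n_0}.
\end{equation*}
The point is that $a_{n_0} \leq NC_2/(n_0+4)$ is already $O(1/k)$ once $n_0$ is of order $k$, so this $O(1/k)$ budget is spread over $k - n_0 + 1 = O(k)$ terms, forcing the squared minimum to be $O(1/k^2)$.

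Choosing $n_0 = \lfloor k/2 \rfloor$, the right-hand side is at most $(NC_2)^2/(n_0+4)$, and dividing by the number of terms yields $\big(\min_{n_0 \leq n \leq k} S(\bx^n)\big)^2 \leq (NC_2)^2/\big[(n_0+4)(k-n_0+1)\big]$. The only genuinely computational point is the elementary inequality $(n_0+4)(k-n_0+1) \geq (k+4)^2/4$ for every $k \geq 0$; since the two factors sum to $k+5$ and this choice of $n_0$ keeps the split close to balanced, it is verified directly by treating the two parities of $k$. This gives $\big(\min_{n_0 \leq n \leq k} S(\bx^n)\big)^2 \leq 4(NC_2)^2/(k+4)^2$, whence $\min_{0 \le n \le k} S(\bx^n) \leq \min_{n_0 \le n \le k} S(\bx^n) \leq \frac{2NC_2}{k+4}$, completing the proof.
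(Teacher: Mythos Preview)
Your proof is correct and follows essentially the same approach as the paper: the first part is identical (Corollary~\ref{C:Suff} plus the initial bound $a_0 \leq N\max_i K_i \leq NC_2/4$ feeding into Lemma~\ref{L:Technical2} with $m=4$), and for the second part the paper also performs a windowed telescoping of Lemma~\ref{L:MainAda} over the second half of the iterations and splits into even and odd $k$ to verify the needed arithmetic inequality. The only cosmetic difference is that the paper parametrizes the window as $\{k_0,\ldots,2k_0\}$ or $\{k_0,\ldots,2k_0+1\}$ rather than $\{\lfloor k/2\rfloor,\ldots,k\}$.
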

	\begin{proof}
		Denote $a_{k} = H(\bx^{k}) - H^{\ast}$. Thanks to Corollary \ref{C:Suff} we
		get that \eqref{L:Technical2:1} holds true with $\gamma = 1/(NC_2)$. In
		addition, from Lemma \ref{L:UppBoundS} and \eqref{L:DescentAdap2:2}, we have
		\begin{equation*}
			a_{0} = H(\bx^{0}) - H^{\ast} \leq S(\bx^{0}) = \sum_{i = 1}
			^{N} S_{i}(\bx^{0}) \leq \sum_{i = 1}^{N} K_{i} \leq N\max_{i = 1 , 2 ,
			\ldots , N} K_{i} \leq \frac{NC_2}{4}.
		\end{equation*}
		Hence, $a_{0} \leq 1/(4\gamma)$. Picking $m = 4$, we conclude from Lemma
		\ref{L:Technical2} that \eqref{T:RateAdap:1} holds. To find the bound on the optimality
		measure $S(\cdot)$, from Lemma \ref{L:MainAda}, we have for any $n \geq 0$
		\begin{equation*}
			H(\bx^{n}) - H(\bx^{n + 1}) \geq \gamma S(\bx^{n}
			)^{2}.
		\end{equation*}	
		For any $k_{0} \geq 0$, summing the latter inequality for $n = k_{0} , k_{0} + 1 ,
		\ldots , 2k_{0}$, we obtain that
		\begin{equation*}
			\gamma \sum_{n = k_{0}}^{2k_{0}} S(\bx^{n})^{2} \leq H(\bx^{k_{0}}
			) - H(\bx^{2k_{0} + 1}) \leq H(\bx^{k_{0}}) - H
			(\bx^{\ast}) \leq \frac{1}{\gamma} \cdot \frac{1}{k_{0} + 4},
		\end{equation*}	
		where the last inequality follows from \eqref{T:RateAdap:1}. Hence,
		\begin{equation} \label{T:RateAdap:2}
			\min_{n = k_{0} , k_{0} + 1 , \ldots , 2k_{0}} S(\bx^{n})^{2} \leq 		
			\frac{1}{\gamma^{2}} \cdot \frac{1}{(k_{0} + 1)(k_{0} + 4)}
			\leq \frac{1}{\gamma^{2}} \cdot \frac{1}{(k_{0} + 2)^{2}},
		\end{equation}
		where we have used the fact that $(k_{0} + 1)(k_{0} + 4) \geq
		(k_{0} + 2)^{2}$. Similarly, for any $k_{0} \geq 0$,
		\begin{equation*}
			\gamma \sum_{n = k_{0}}^{2k_{0} + 1} S(\bx^{n})^{2} \leq H
			(\bx^{k_{0}}) - H(\bx^{2k_{0} + 2}) \leq H(\bx^{k_{0}}
			) - H(\bx^{\ast}) \leq \frac{1}{\gamma} \cdot \frac{1}{k_{0} + 4}.
		\end{equation*}	
		Hence
		\begin{equation} \label{T:RateAdap:3}
			\min_{n = k_{0} , k_{0} + 1 , \ldots , 2k_{0} + 1} S(\bx^{n})^{2} \leq 	
			\frac{1}{\gamma^{2}} \cdot \frac{1}{(k_{0} + 2)(k_{0} + 4)}
			\leq \frac{1}{\gamma^{2}} \cdot \frac{1}{(k_{0} + 5/2)^{2}},
		\end{equation}
		where we have used the fact that $(k_{0} + 2)(k_{0} + 4) \geq
		(k_{0} + 5/2)^{2}$. Notice that $k_{0} + 5/2 = (2k_{0} + 1)/2 +
		2$. Therefore, by combining \eqref{T:RateAdap:2} when $k$ is even and \eqref{T:RateAdap:3} when $k$ is odd, we conclude
		that
		\begin{equation*}
			\min_{n = 0 , 1 , \ldots , k} S(\bx^{n}) \leq \frac{1}{\gamma} \cdot
			\frac{1}{k/2 + 2} \leq \frac{2NC_2}{k + 4},
		\end{equation*}
		which concludes the proof.
	\end{proof}

\subsection{Backtracking Version of CBCG} \label{SSec:Back}
	Computing the adaptive stepsize requires to know the constants $\beta_{i}$, $i = 1 , 2 , \ldots , N$. In practice, these constants may not be known in advance or their known
	upper-approximations may be too loose. A common approach to overcome this problem is
	to use a backtracking scheme in order to estimate the unknown constants that are required
	to ensure convergence of the algorithm. This strategy is also effective in the context of
	CG-type algorithms.

The crucial property for the convergence analysis with adaptive stepsize is the block structured descent condition given in Lemma \ref{L:DescentAdap}. When the constants in Lemma \ref{L:DescentAdap} are not known, a workaround is to enforce this descent condition algorithmically, which leads to the following scheme.
\bigskip

\fcolorbox{black}{Ivory2}{
	\parbox{15cm}{
		{\bf CBCG-B: Cyclic Block Conditional Gradient with Backtracking} \\
		{\bf Initialization.} $\bx^0 \in X$, $\beta_{\rm init} > 0$, $\kappa > 1$, $\xi^{-1}_i =1$, $i = 1, 2, \ldots, N$.\\
		{\bf General Step.}	For $k = 1 , 2 , \ldots$,
		\begin{itemize}
			\item[$\rm{(i)}$] For any $i = 1 , 2 , \ldots , N$, compute
				\begin{align}
					\bp_{i}^{k} \in \argmin_{\bp_{i} \in X_{i}} \left\{ \langle \nabla_{i}
					f(\bx^{k,i - 1}) , \bp_{i} \rangle  + g_{i}(\bp_{i}) \right\},
					\label{eq:CBCG-B1}
				\end{align}
				Find $\xi \in \NN$, which is the smallest integer $\xi \geq \xi^{k-1}_i$ such that
				\begin{align}
					\label{eq:backtrack}
					H\left(\bx^{k,i-1}\right)- H\left(\bx^{k,i-1} + \alpha \bbu_i (\bp^k_{i} -  \bx^{k,i-1}_i)\right) \geq \frac{\alpha}{2} S_i(\bx^{k,i-1}),
				\end{align}
				where $\alpha = \min\left\{ \frac{S_i(\bx^{k,i-1})}{\kappa^{\xi} \beta_{\rm init} ||\bba_i (\bp^k_{i} -  \bx^{k,i-1}_i)||^2}, 1 \right\}$ and set
				\begin{align}
					\label{eq:backtrack2}
					&\xi^k_i = \xi,\quad\beta^k_i = \kappa^{\xi} \beta_{\rm init},\quad\alpha^k_i = \min\left\{ \frac{S_i(\bx^{k, i-1})}{\beta^k_i ||\bba_i (\bp^k_{i} -  \bx^{k,i-1}_i)||^2}, 1 \right\},\\
					&\bx^{k,i} = \bx^{k,i-1} + \alpha^k_i \bbu_i (\bp^k_{i} -  \bx^{k,i-1}_i).
				\end{align}
			\item[$\rm{(ii)}$] Update: $\bx^{k+1} = \bx^{k,N}$.
		\end{itemize}
	}
}
\bigskip

	Under Assumption \ref{AssumptionB}, we have from Lemma \ref{L:DescentAdap}, that
	\eqref{eq:backtrack} holds provided that $\beta_{i}^{k} \geq \beta_{i}$. We therefore have for
	any $k \geq 0$ and any $i = 1 , 2 , \ldots , N$ that
	\begin{equation} \label{eq:backtrackBounded}
		\beta_{init} \leq \beta_{i}^{k} \leq \bar{\beta_{i}} \equiv \max \left\{ \kappa
		\beta_{i} , \beta_{init} \right\}.
	\end{equation}
	The main insight is given in the following lemma which its proof follows the same
	arguments as that of Lemma \ref{L:DescentAdap} using \eqref{eq:backtrack}.
	\begin{lemma} \label{L:DescentAdapBack}
		Let $\Seq{\bx}{k}$ be the sequence generated by the CBCG-B method.  Then, for any $k \geq 0$ and $i \in \left\{ 1 , 2
		, \ldots , N \right\}$, we have
		\begin{equation*}
			H(\bx^{k,i - 1}) - H(\bx^{k,i}) \geq \frac{\alpha_{i}^{k}}{2}
			S_{i}(\bx^{k,i - 1}) \geq \frac{(\alpha_{i}^{k})^{2}
			\beta_{i}^k}{2}\|\bba_{i}(\bp_{i}^{k} - \bx_{i}^{k})\|^{2},
		\end{equation*}
		where
		\begin{equation*}
			\alpha_{i}^{k} = \min\left\{ \frac{S_{i}(\bx^{k,i - 1})}{\beta_{i}^{k}
			\|\bba_{i}(\bp_{i}^{k} - \bx_{i}^{k})\|^{2}} , 1 \right\}.
		\end{equation*}
	\end{lemma}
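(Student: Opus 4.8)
The plan is to mirror the proof of Lemma \ref{L:DescentAdap} almost verbatim, the only structural change being the \emph{origin} of the first inequality: in the non-backtracking case it was extracted from the composite descent estimate of Lemma \ref{L:Descent} (which relies on the true constant $\beta_i$), whereas here the required descent is imposed directly by the acceptance test \eqref{eq:backtrack}. Accordingly, I would first record that $\bx_i^{k,i-1}=\bx_i^k$ by \eqref{BlockUpdate}, so that the quantity $\|\bba_i(\bp_i^k-\bx_i^{k,i-1})\|^2$ appearing in the algorithm coincides with $\|\bba_i(\bp_i^k-\bx_i^k)\|^2$ in the statement, and the two displays may be identified throughout.

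For the first inequality, observe that the accepted stepsize $\alpha_i^k$ defined in \eqref{eq:backtrack2} is precisely the value of $\alpha$ tested in \eqref{eq:backtrack} for the chosen exponent $\xi=\xi_i^k$ and the associated $\beta_i^k=\kappa^{\xi_i^k}\beta_{\rm init}$. Since this is the value at which the backtracking loop halts, the test \eqref{eq:backtrack} is satisfied with $\alpha=\alpha_i^k$, and writing $\bx^{k,i}=\bx^{k,i-1}+\alpha_i^k\bbu_i(\bp_i^k-\bx_i^{k,i-1})$ this reads exactly
\[
H(\bx^{k,i-1})-H(\bx^{k,i})\geq \tfrac{\alpha_i^k}{2}S_i(\bx^{k,i-1}),
\]
which is the first inequality.

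The second inequality is then obtained by the same elementary case split as in Lemma \ref{L:DescentAdap}, with $\beta_i$ replaced by $\beta_i^k$. If the minimum defining $\alpha_i^k$ equals $1$, then $S_i(\bx^{k,i-1})\geq\beta_i^k\|\bba_i(\bp_i^k-\bx_i^k)\|^2$, so $\frac{\alpha_i^k}{2}S_i(\bx^{k,i-1})=\frac12 S_i(\bx^{k,i-1})\geq\frac{\beta_i^k}{2}\|\bba_i(\bp_i^k-\bx_i^k)\|^2=\frac{(\alpha_i^k)^2\beta_i^k}{2}\|\bba_i(\bp_i^k-\bx_i^k)\|^2$; if instead $\alpha_i^k=S_i(\bx^{k,i-1})/(\beta_i^k\|\bba_i(\bp_i^k-\bx_i^k)\|^2)<1$, a direct substitution shows the two outer expressions are in fact equal. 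In either branch the claimed chain of inequalities holds.

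I do not anticipate a genuine obstacle, since the argument is essentially that of Lemma \ref{L:DescentAdap}. The one point requiring care---and the only place where Assumption \ref{AssumptionB} and the true $\beta_i$ re-enter---is the well-definedness of the step, namely that the backtracking loop terminates with a finite $\xi_i^k$; this is exactly the content of \eqref{eq:backtrackBounded}, already established before the statement, so for the present lemma it may be taken for granted and the proof reduces to the two short verifications above.
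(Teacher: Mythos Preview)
Your proposal is correct and follows exactly the route the paper intends: the paper's proof consists only of the remark that the argument ``follows the same arguments as that of Lemma \ref{L:DescentAdap} using \eqref{eq:backtrack},'' and you have filled in precisely those details, correctly noting that the first inequality is now supplied directly by the backtracking acceptance test rather than by Lemma \ref{L:Descent}, while the second inequality is the same elementary case split with $\beta_i$ replaced by $\beta_i^k$.
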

	Using the bounds in \eqref{eq:backtrackBounded}, we obtain the following rate of
	convergence result for the CBCG algorithm with a backtracking scheme.
	\begin{theorem}[Sublinear rate for CBCG-B] \label{T:RateBack}
		Let $\Seq{\bx}{k}$ be the sequence generated by the CBCG-B method.  Then, for any $k \geq 0$, we have
		\begin{equation*}
			H(\bx^{k}) - H^{\ast} \leq \frac{NC_3}{k + 4},
		\end{equation*}
		and
		\begin{equation*}
			\min_{n = 0 , 1 , \ldots , k} S(\bx^{n}) \leq \frac{2NC_3}{k + 4},
		\end{equation*}
		where
		\begin{equation*}
			C_3 = 4\left [\max_{i = 1 , 2 , \ldots , N} \max \left\{ \bar{\beta_{i}}\|\bba_i\|^{2}D_{i}^{2} , K_{i} \right\} + \frac{NL_{F}^{2}D^{2}\max_{i = 1 , 2 , \ldots ,
			N} \|\bba_i\|^{2}}{\beta_{init}}\right],
		\end{equation*}
		and $\bar{\beta_{i}}$ is given in (\ref{eq:backtrackBounded}) while $K_{i}$ is defined in \eqref{eq:defKi}, for $i = 1, 2, \ldots, N$
	\end{theorem}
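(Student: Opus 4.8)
The plan is to mirror the argument of Theorem~\ref{T:RateAdap}, replacing Lemma~\ref{L:DescentAdap} by its backtracking counterpart Lemma~\ref{L:DescentAdapBack}, and replacing the fixed constants $\beta_i$ (respectively $\beta_{\min}$) by the uniform upper bound $\bar{\beta_i}$ (respectively the uniform lower bound $\beta_{init}$) supplied by \eqref{eq:backtrackBounded}. The crucial observation is that the backtracking scheme enforces exactly the block descent inequality of Lemma~\ref{L:DescentAdapBack} at every inner step, so that the per-block constants $\beta_i^k$ play the role of $\beta_i$ but now vary with $k$; since \eqref{eq:backtrackBounded} pins them to the interval $[\beta_{init},\bar{\beta_i}]$, every estimate of Section~\ref{SSec:Adap} carries over once these two substitutions are made.

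Concretely, I would first re-derive the analog of Lemma~\ref{L:DescentAdap2} by repeating its two-case proof using Lemma~\ref{L:DescentAdapBack}: when $\alpha_i^k=1$ the bound $S_i(\bx^{k,i-1})^2/(2K_i)$ is unchanged, as it relies only on the uniform estimate $S_i(\bx)\leq K_i$ from \eqref{L:DescentAdap2:2}; while in the case $\alpha_i^k<1$ the denominator $\beta_i^k\|\bba_i\|^2 D_i^2$ is controlled from above via $\beta_i^k\leq\bar{\beta_i}$, yielding $H(\bx^{k,i-1})-H(\bx^{k,i})\geq S_i(\bx^{k,i-1})^2/(2\max\{\bar{\beta_i}\|\bba_i\|^2 D_i^2,K_i\})$. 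Next I would reproduce the sufficient-decrease estimate of Lemma~\ref{L:Suff}; here the inequality $(\alpha_i^k)^2\|\bba_i(\bp_i^k-\bx_i^k)\|^2\leq (2/\beta_i^k)(H(\bx^{k,i-1})-H(\bx^{k,i}))$ forces the \emph{lower} bound $\beta_i^k\geq\beta_{init}$, producing $H(\bx^k)-H(\bx^{k+1})\geq (\beta_{init}/(2N))\|\bba(\bx^k-\bx^{k,i-1})\|^2$.

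With these two ingredients the proof of Lemma~\ref{L:MainAda} goes through verbatim, now delivering $H(\bx^k)-H(\bx^{k+1})\geq S(\bx^k)^2/(NC_3)$ with $C_3$ as stated; the only changes are $\bar{\beta_i}$ in the first bracketed maximum and $\beta_{init}$ in the denominator of the second term. Combining this with Lemma~\ref{L:UppBoundS} exactly as in Corollary~\ref{C:Suff} yields the quadratic recursion $a_k-a_{k+1}\geq a_k^2/(NC_3)$ for $a_k=H(\bx^k)-H^*$, and the initialization $a_0\leq S(\bx^0)\leq\sum_i K_i\leq N\max_i K_i\leq NC_3/4$ verifies the hypothesis of Lemma~\ref{L:Technical2} with $\gamma=1/(NC_3)$ and $m=4$, giving the function-value rate. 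The optimality-measure bound then follows by the same even/odd summation over the window $\{k_0,\ldots,2k_0\}$ used in Theorem~\ref{T:RateAdap}, with $C_2$ replaced by $C_3$ throughout.

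The argument presents no genuine obstacle beyond bookkeeping: the one point requiring care is tracking the \emph{direction} of each $\beta$-estimate, using the upper bound $\bar{\beta_i}$ wherever the effective constant sits in a denominator that must be bounded above (the $K_i$-type terms) and the lower bound $\beta_{init}$ wherever it sits in a denominator that must be bounded below (the sufficient-decrease term). Once \eqref{eq:backtrackBounded} is in hand, these uniform bounds decouple the variable constants $\beta_i^k$ from the iteration index, and the adaptive-stepsize analysis applies essentially unchanged.
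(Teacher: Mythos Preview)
Your proposal is correct and follows exactly the approach the paper itself takes: the paper's own proof is a one-line remark that the argument of Theorem~\ref{T:RateAdap} goes through verbatim once Lemma~\ref{L:DescentAdap} is replaced by Lemma~\ref{L:DescentAdapBack}, and you have spelled out precisely how that replacement propagates through Lemmas~\ref{L:DescentAdap2}, \ref{L:Suff} and \ref{L:MainAda} via the bounds $\beta_{init}\leq\beta_i^k\leq\bar{\beta_i}$ of \eqref{eq:backtrackBounded}. Your bookkeeping of which direction of the bound is needed where is also correct.
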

	\begin{proof}
		 The arguments of this proof are similar to those in the proof of Theorem
		 \ref{T:RateAdap} where we replace Lemma \ref{L:DescentAdap} with Lemma
		 \ref{L:DescentAdapBack}.
	\end{proof}
\section{\ed{Further Discussions}}
\label{Sec:ML}
\ed{In this section we begin by showing that the  complexity analysis presented in Section \ref{SSec:Adap} holds for the CBCG method with exact line search when the smooth part of the objective function is quadratic. Then, we discuss two issues that are relevant for the implications of our analysis of the CBCG method.}
	
\subsection{Exact Line Search for Quadratic Problems} \label{SSec:Exact}
	We recall another well known stepsize rule --  exact line search strategy (see, for
	example, \cite{DR1967}). This stepsize is defined as follows:
	\begin{equation} \label{exact}
		\alpha_{i}^{k} \in \argmin_{\alpha \in \left[0 , 1\right]} H(\bx^{k,i - 1} +
		\alpha \bbu_{i}(\bp_{i}^{k} - \bx_{i}^{k,i - 1})).
	\end{equation}
The minimization step (\ref{exact}) can incur unnecessary additional computations, unless it can be carried out efficiently. This is the case for quadratic functions for which it reduces to the minimization of a one-dimensional quadratic over a segment. It is therefore tempting to use CBCG with this stepsize rule, but our convergence analysis does not cover it explicitly. However, in the quadratic case, exact line search can be recast as an adaptive stepsize strategy. Indeed, suppose for example that the problem has the following form:
$$ \min \left \{ \frac{1}{2}\left \|\sum_{i=1}^N \bba_i \bx_i \right \|^2+ \sum_{i=1}^n \langle \bb_i,\bx_i \rangle + \ed{\sum_{i=1}^n\tilde{g}_i(\bx_i)}\right\},$$
where $\bba_i \in \real^{n \times n_i}$, $\bb_i \in \real^{n_i}$ and $\tilde{g}_i$ satisfy Assumption \ref{AssumptionA}(i). This problem fits model (\ref{mainproblem}) with $F(\cdot) = \frac{1}{2}\|\cdot\|^2, \bba = (\bba_1,\bba_2,\ldots,\bba_N)$ \ed{and $g_i(\cdot)=\langle \bb_i,\cdot \rangle + \tilde{g}_i(\cdot)$}.
Therefore, we can choose $\beta_{i} = 1$, $i = 1 , 2 , \ldots , N$ and the exact line search strategy is equivalent to our adaptive stepsize strategy given in \eqref{adaptive} since the upper bound of Lemma \ref{L:Descent} holds with equality. Therefore, in the quadratic case, convergence for the exact line search strategy follows from Theorem \ref{T:RateAdap}.

\subsection{\ed{Random permutations}}
\ed{All the arguments presented in Section \ref{Sec:Analysis} remain valid if the order of the blocks is not fixed for all iterations. The only important element is that all blocks are visited once at each iteration, but the order could change, in an arbitrary way, at each iteration. In particular, the arguments are valid when the order of blocks  is picked as a random permutation at the beginning of each iteration. Therefore, all the convergence rate estimates presented in Section \ref{Sec:Analysis} still hold true when using random permutations.

The practical motivation for considering random permutations in the order of the blocks is that it may be beneficial in practical settings \cite{shalev2013stochastic}. Since the derived theoretical efficiency estimates apply to any rule for choosing the order of the blocks at the beginning of each iteration (deterministic or random), they actually do not explain the potential performance differences between different variants, such as purely cyclic versus random permutations.}

\subsection{\ed{Implications for Coordinate Descent in Machine Learning}}
\ed{In the case of blocks of size one (i.e., $N=n$), CBCG method bears a lot of resemblance to coordinate descent type methods. For example, in dimension one, a conditional gradient step with exact line search is the same as exact minimization. Therefore, CBCG with blocks of size one and exact line search is equivalent to coordinate descent with exact minimization. As we have seen in the previous section, our convergence analysis holds for this setting in the case of quadratic objectives. This setting has some interesting applications.

The $\ell_2$ regularized empirical risk minimization is considered in \cite{shalev2013stochastic}, where  the authors analyze the stochastic coordinate descent method with exact minimization in the dual -- a method referred to as ``Stochastic Dual Coordinate Ascent" (SDCA). The dual problem can be written as follows (using our notations):
\begin{align}
	\label{eq:SDCA}
	\min_{\bx}\left\{\frac{1}{n} \sum_{i=1}^n g_i(\bx_i) + \frac{\lambda}{2} \left\|\frac{1}{\lambda n} \sum_{i=1}^n \bba_i \bx_i \right\|^2  \right\},
\end{align}
where $\lambda>0$ is a parameter, and for any $i=1,2,\ldots,n$, $\bx_i \in \real$, $\bba_i \in \real^n$ and $g_i$ is a closed, convex and proper function with a bounded domain. This is exactly the setting of Section \ref{SSec:Exact} and therefore Theorem \ref{T:RateAdap} gives explicit deterministic rate estimates of the duality gap for the cyclic and random permutation coordinate descent variants applied to problem (\ref{eq:SDCA}) when $g_1,g_2,\ldots,g_n$ satisfy Assumption \ref{AssumptionA}. It can be checked that in the setting of problem (\ref{eq:SDCA}) (with $\lambda = 1$ for simplicity), we get in Theorem \ref{T:RateAdap} that as $n$ grows, the dominant term in the rate estimate (\ref{T:RateAdap:1}) is of the form $4 D^2 \max_i\|\bba_i\|^2 / (k+4)$. 
 The quantity $D^2$ remains proportional to $n$ (see identity (\ref{sumd})) and the rate estimate of (\ref{T:RateAdap:1}) still suffers from a multiplicative dependence in $n$, since each iteration requires an effective pass through the $n$ coordinates. Therefore, these results remain mostly of theoretical interest in the context of coordinate descent in machine learning where the focus is on large $n$ and there is space for improvement in order to theoretically grasp the practical performances of these types of methods.
}

\section{Numerical Experiments} \label{Sec:Numerics}
	The experiments presented in this section correspond to situations where for each $i$, the nonsmooth
	function $g_{i}$, is taken to be the indicator of the set $X_{i}
	\subseteq \real^{n_{i}}$, with the eventual addition of a linear term. We therefore recover the smooth
	constrained optimization model of the traditional conditional gradient method (see also Remark
	\ref{rem:tradiCG}).\delete{ Furthermore, for the cyclic selection rule, in all the experiments, we
	use the ``random permutation'' approach which consists in randomly changing the order of
	the blocks at the beginning of each iteration. The convergence analyses described so far do not depend on the order of the blocks and are still valid when it changes at each iteration. Therefore, all the results of Section \ref{Sec:Analysis}  hold deterministically for this ``random permutation'' rule which we do not differentiate from the cyclic update rule, and hence the corresponding algorithms bear the same name in this section.} We begin with a modeling remark in Section \ref{SSec:Exact}. Numerical results are given in Section \ref{SSec:Numerics-Synthetic} for synthetic examples and in Section \ref{SSec:Numerics-SVM} for the \ed{structured} SVM training problem.

\subsection{Synthetic Problems} \label{SSec:Numerics-Synthetic}
	We generate artificial convex quadratic problems with box constraints in $\real^{100}$. We 	
	consider problems of the following form
	\begin{equation} \label{RandomPB}
		\min_{\norm{\bx}_{\infty} \leq 1} \frac{1}{2}(\bx - \by)^{T}\bbq(\bx - \by
		),
	\end{equation}
	where $\norm{\cdot}_{\infty}$ denotes the $\ell_{\infty}$ norm, $\by \in \real^{100}$ and
	$\bbq \in \real^{100 \times 100}$ are given. The problem has a natural coordinate-wise
	structure which allows to apply the CBCG algorithm where each block consists of a single
	coordinate. Indeed, by setting $f(\cdot) = \frac{1}{2}(\cdot - \by
	)^{T}\bbq(\cdot - \by)$ and $g_{i} = \delta_{[-1 , 1]}$, $i = 1 , 2 , \ldots ,
	100$, problem \eqref{RandomPB} fits our model \eqref{mainproblem}. We generate random
	instances of problem \eqref{RandomPB} as follows.
	\begin{itemize}
		\item Set $d = 100$ and $n = 200$.
		\item Generate $\bbx \in \real^{n \times d}$ with standard normal entries.
		\item \ed{Set $\bbq = \frac{1}{n}\bbx^T \bbd^2 \bbx$ where $\bbd = \diag(1/n^2,1/(n-1)^2,...,1)$}
		\item Generate $\by$ with standard normal entries.
	\end{itemize}
	We compare the standard Conditional Gradient (CG) algorithm \cite{J2013}, the	Random Block
	Conditional Gradient (RBCG) algorithm \cite{LJJSP2013} \ed{(with block chosen uniformly at random)}, the Cyclic Block Conditional Gradient \ed{where the order of the blocks is fixed for all iterations (CBCG-C) and the Cyclic Block Conditional Gradient where the order of the blocks is chosen by a random permutation at each iteration (CBCG-P)}\delete{, which is the method of interest in this paper}. For each algorithm, we
	compare three different stepsize rules:
	\begin{itemize}
		\item {\bf Predefined:} the stepsize which is given in \eqref{pre-defined}.
			Note that for RBCG, there is a slight modification in the definition of the predefined stepsize. \ed{There is no notion of a ``cycle" in RBCG and the stepsize has the form $2n / (\tilde{k} + 2n)$ where $\tilde{k}$ is the number of blocks updated so far, see \cite{LJJSP2013}}.
		\item {\bf \ed{Adaptive with Backtracking}:} set $\bba = \bbi$ and compute the stepsize using the backtracking
			scheme.
		\item {\bf Exact line search:} the stepsize is chosen to minimize the objective as in
			\eqref{exact}. \ed{Note that as explained in Section \ref{SSec:Exact}, this corresponds to use the adaptive stepsize (\ref{adaptive}) with $\bba = \bbd \bbx / \sqrt{n}$ since we consider a quadratic objective function.}
	\end{itemize}
	For problems of the form \eqref{RandomPB}, the second strategy is questionable since a
	closed form expression is available for the exact line search. However, the comparison to this strategy illustrates
	differences in algorithmic behaviors related to the choice of $\bba$ in the model
	\eqref{mainproblem} \ed{as well as the performance of the backtracking scheme}.
	
	\begin{figure}[t]
		\centering
		\includegraphics[width=.9\textwidth]{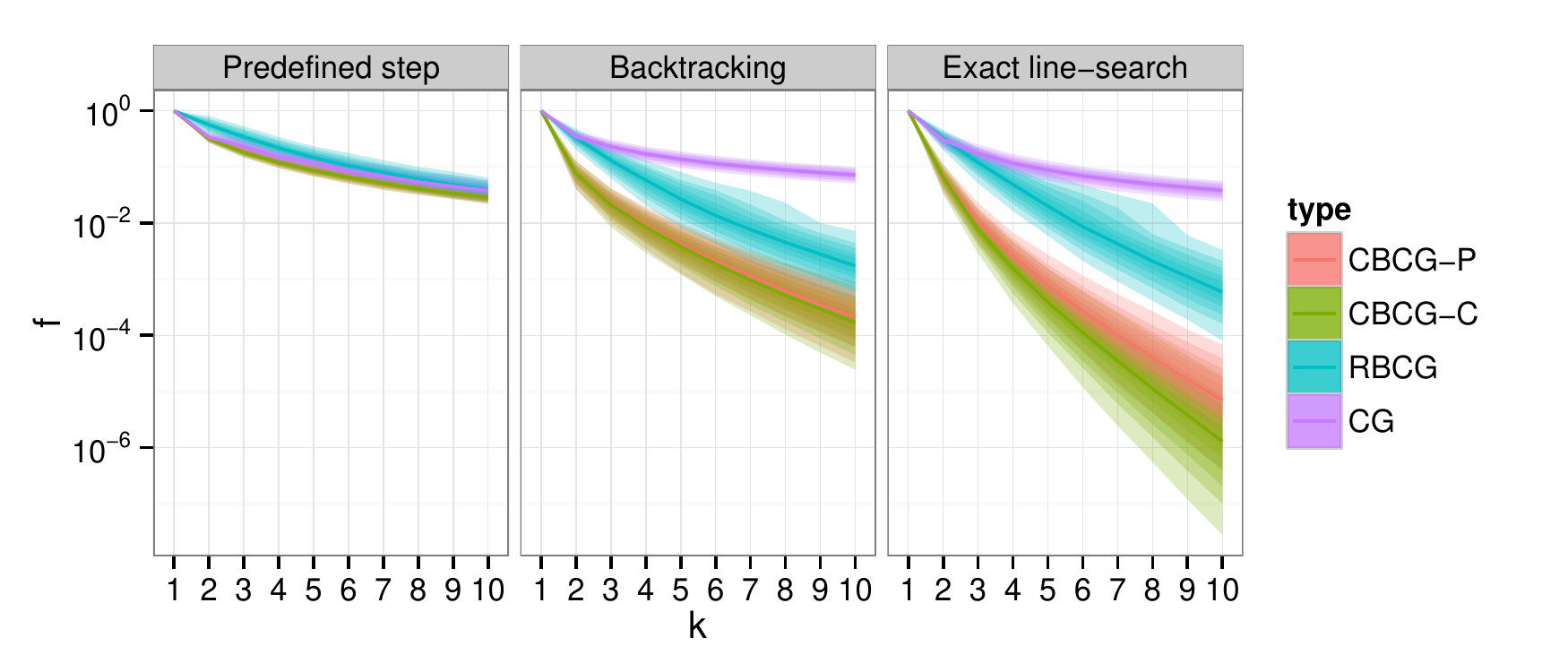}
		\caption{Comparison of Conditional Gradient (CG), its random block version (RBDG), \ed{its cyclic block version with fixed block order (CBCG-C) and its cyclic block version with random permutation block order (CBCG-P)}. We compare three different stepsize strategies based on
			$1000$ randomly generated instances of problem \eqref{RandomPB}. The central line
			is the median over the 1000 runs and the ribbons show 98\%, 90\%, 80\%, 60\% and
			40\% quantiles. For all methods, $k$ represents the number of effective passes through $d$ coordinates.}
		\label{fig:firstResults}
	\end{figure}

	\begin{remark} \label{rem:lsAdapt}
		For CG and RBCG, a sublinear convergence rate holds for the three stepsize strategies.
		For \ed{both CBCG-C and CBCG-P}, in the case of the predefined and of the adaptive stepsizes, convergence is
		ensured by Theorems \ref{T:RatePre} and \ref{T:RateBack}, respectively. Furthermore,
		since we consider quadratic objective functions, exact line search stepsize strategy
		can be seen as a particular case of our adaptive strategy and the convergence follows from Theorem
		\ref{T:RateAdap} (see also Section \ref{SSec:Exact}).
	\end{remark}
	We generate $1000$ random instances of problem  \eqref{RandomPB}, $f_w$ denoting the objective function of the $w$-th randomly generated problem, $w = 1, 2, \ldots, 1000$.  For each problem, we run the \ed{four} different algorithms with the three different stepsize rules (the initialization is at the origin). The results for the first $10$ iterations are presented in Figure \ref{fig:firstResults}. For each algorithm, increasing $k$ by $1$ means that $N = 100$ blocks have been queried, randomly for RBCG, sequentially for CBCG and all at once for CG. Since each objective function is generated randomly, it does not make sense to directly compare performance across different problems on the same graph. To overcome this, for each $w = 1,2 , \ldots, 1000$, we ``center'' and ``scale'' the function values. That is, for each objective $f_w$, $w = 1, 2, \ldots, 1000$, we estimate the optimal value $f_w^*$ of (\ref{RandomPB}) by running CBCG-P with exact line search for 200 more iterations. For such a number of iterations, we observed on preliminary experiments that the algorithm reached machine precision on random instances of problem (\ref{RandomPB}). The quantity plotted in Figure \ref{fig:firstResults} is given by the following affine transformation,
\begin{align*}
    \frac{f_w(\bx^k) - f_w^*}{f_w(\bo) - f_w^*},
\end{align*}
so that in Figure \ref{fig:firstResults}, the first value is always $1$ and the represented quantities are positive and asymptotically tend to $0$. The main comments regarding this experiment are the following:

	\begin{itemize}
		\item For each stepsize rule, CBCG-C has an advantage.
		\item \ed{There is not much difference between the two variants CBCG-C and CBCG-P in this experiment}.
		\item The predefined stepsize rule leads to much slower convergence.
		\item \ed{Adaptive with backtracking} and exact line search rules yield improved convergence speed for
			both CBCG and RBCG, which is not the case with the CG.
		\item Exact line search rule has a slight advantage over the backtracking rule.
	\end{itemize}
	The last point deserves further comments. Indeed, in model (\ref{mainproblem}), there are different possible choices of matrix $\bba$ and function $F$ that lead to equivalent problems. Despite being equivalent problems, different model choices may lead to variations in algorithmic performances when numerically solving a problem. This is what we observe here. Indeed, as pointed out in Section  \ref{SSec:Exact}, the exact line search strategy corresponds to choosing an $F$ that is perfectly conditioned. This means that variations of $F$ around its minimum are isotropic. On the other hand, choosing $\bba = \bbi$ leads to a choice of $F$ that is less well-behaved. These numerical experiments suggest that choosing $\bba$ that corresponds to a better conditioned $F$ leads to better numerical performances. \ed{Finally, the gap between the two strategies remains small, highlighting the efficiency of the backtracking scheme.}

\subsection{\ed{Structured} SVM} \label{SSec:Numerics-SVM}
	The main motivation for the introduction of block version of CG method with random update
	rule in \cite{LJJSP2013} is that it leads to a new efficient algorithm for training the
	\ed{structured} SVM \cite{TGK2004,TJHA2005}. We refer the reader to \cite{LJJSP2013} and the
	references therein for background on this problem and its relations to the conditional
	gradient method. In brief, the \ed{structured} SVM solves a multi-class classification task. It is
	dedicated to problems for which the output classes are embedded in a combinatorial
	structure such as trees, graphs or sequences. In this setting the number of classes can be
	enormous which results in optimization problems with an untractable number of linear
	constraints. For some of these problems efficient decoding algorithms can be used as
	oracles to compute sub-gradients of the \ed{structured} SVM problem. They can also be used as oracles to solve the linearized sub-problem required to run the conditional gradient and block conditional gradient methods on the dual of the \ed{structured} SVM. In this section, we briefly recall the mathematical formulation of the dual
	\ed{structured} SVM problem and provide a numerical comparison between random and cyclic update
	rules for block conditional gradient in this context. The purpose is not to be exhaustive here and we solely focus on the aspects of the problem related to optimization.
	Therefore, we compare the numerical performances of the two selection rules on this real world example based on an optimization criterion. In
	this section, $N$ denotes the number of training examples, $M$ denotes the number of output
	classes and $d$ is an integer such that $\real^{d}$ is a space of tractable size (such that
	elements of $\real^{d}$ can be stored in memory). The dual variable of the \ed{structured} SVM
	is a matrix $\balpha \in \real^{N \times M}$ with non-negative entries (this could actually
	be refined with example dependent output classes, but we stick to this notation as a first
	approximation). The dual problem of the \ed{structured} SVM can be written as follows
	\begin{equation} \label{StructuralSVM}
		\min_{\alpha \geq \bo} \left\{ \frac{\lambda}{2}\norm{\bba \balpha}^{2} - \mathrm{Tr}
		(\bb^{T} \balpha) : \, \balpha 1^{M} = 1^{N} \right\},
	\end{equation}
	where $\bba : \real^{N \times M} \rightarrow \real^{d}$ is a linear map, $\bb \in \real^{N
	\times M}$ is a matrix, $\mathrm{Tr}$ denotes the trace operator and $1^{s}$ denotes the
	vector in $\real^{s}$ which all entries are $1$. Problem \eqref{StructuralSVM} has an
	interesting product structure. Indeed, its feasible set can be viewed as a product of $N$
	simplices of dimension $M$. In the context of \ed{structured} SVM it is not necessary to store $
	\balpha$ explicitly, it is sufficient to store and update $\bba \balpha \in \real^{d}$ and $
	\mathrm{Tr}(\bb^{T}\balpha)$. With this information, we can use the specific decoding
	oracles to solve simplex constrained linear subproblems and run the conditional gradient
	algorithm. This constitutes the main advantage of the method here, it allows to explore a
	potentially very large space by using only implicit conditional gradient steps (recall that
	$M$ is the size of a set of combinatorial nature, see \cite{LJJSP2013} for a complete
	derivation and more details). We use the code provided by the authors of \cite{LJJSP2013}
	which gives the possibility to train the \ed{structured} SVM using RBCG, \ed{CBCG-C (fixed block order) or CBCG-P (random permutation block order)} on the Optical
	Character Recognition (OCR) originally proposed in \cite{TGK2004}. \delete{We consider random
	update rule and cyclic update rule with varying block order (or random permutation). } We
	only consider the exact line search strategy \eqref{exact}. Note that our convergence analysis
	can be applied in this setting (see also Section \ref{SSec:Exact}). \ed{It can be checked (see \cite{LJJSP2013} for details about the structured SVM model) that the constant $C_2$ given by Theorem \ref{T:RateAdap} remains bounded away from zero as $N$ grows, in the setting of model \eqref{StructuralSVM}. Therefore, the rate given in \eqref{T:RateAdap:1} suffers from a multiplicative dependence in $N$. From a theoretical point of view, in the context of large training sets (large $N$), this problematic dependence leaves room for improved convergence analysis.} Numerical results in
	terms of the optimality measure $S$ for various values of $\lambda$ are given in Figure
	\ref{fig:structSVM}. \delete{The global behavior is similar to what we observed in the synthetic
	examples of Section \ref{SSec:Numerics-Synthetic}. In particular, the cyclic update rule
has a slight advantage. } \ed{The global behavior differs from what we have observed in the synthetic examples of Section \ref{SSec:Numerics-Synthetic}. The random block selection rule (RBCG) has an advantage over the purely cyclic block selection rule (CBCG-C). The random permutation approach (CBCG-P) gives the best performances here. Since our analysis is the same for both CBCG-C and CBCG-P, it does not allow to explain this empirical difference.}
	\begin{figure}[t]
		\centering
		\includegraphics[width=.9\textwidth]{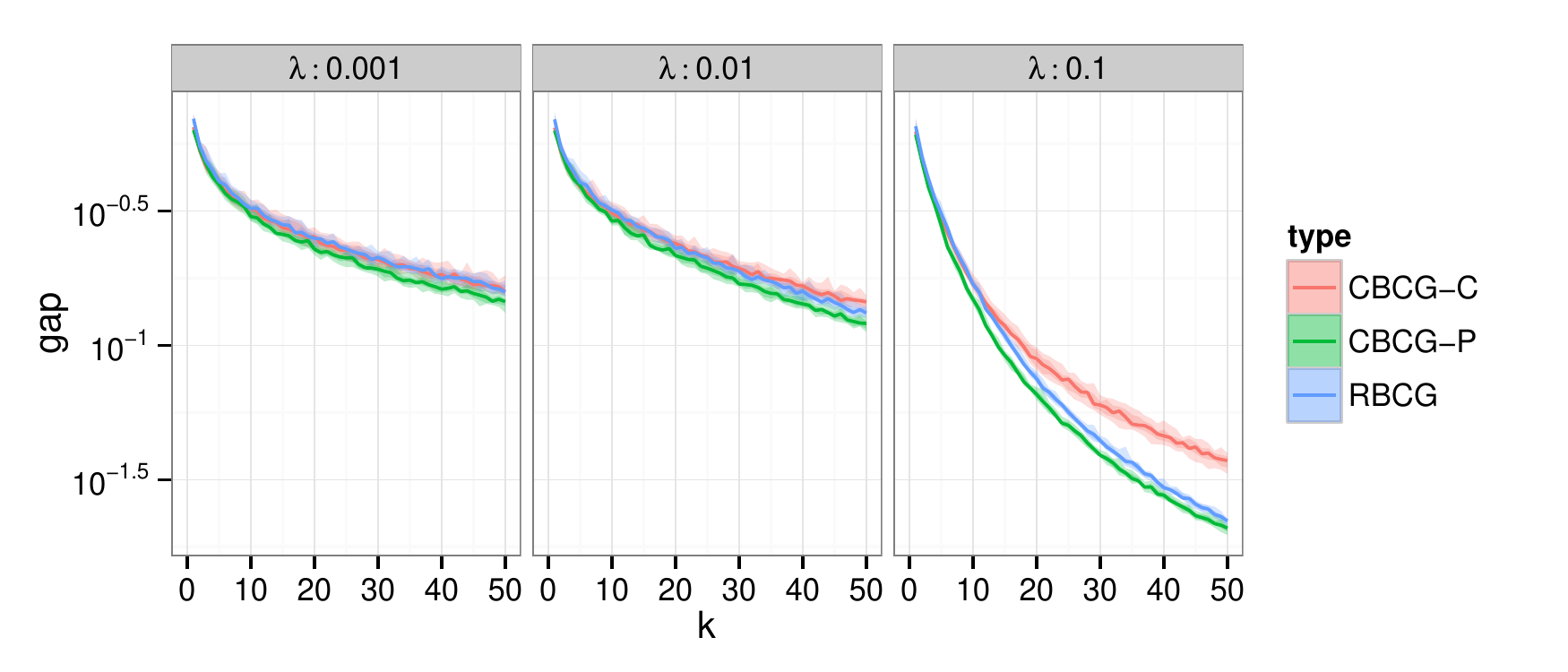}
		\caption{Comparison of RBCG and \ed{CBCG-C (purely cyclic) and CBCG-P (random permutation)} for the \ed{structured} SVM training on the OCR dataset
			of \cite{TGK2004}. The quantity represented is the optimality measure defined by
			\eqref{D:Px} and the heading represents the value of $\lambda$ (from (\ref{StructuralSVM})). The central line is
			the median over $20$ runs and the ribbons show 80\% and 50\% quantiles. For all methods, $k$ represents the number of effective passes
through all the blocks (which correspond to datapoints here).}
		\label{fig:structSVM}
	\end{figure}
	%
	
\section{Discussion and Future Work}
\subsection{\ed{Comparison with existing results}}
We have described the CBCG algorithm and provided an explicit sublinear convergence rate estimate of the form $O(1/k)$, which is canonical for linear oracle based algorithms (up to multiplicative factor) \cite{P87}. This asymptotic rate cannot be improved in general \cite{canon1968tight}. \ed{A few words on the constants are in order. For the traditional CG method, the multiplicative constant is proportional to $L_FD^2$ \cite{J2013}\footnote{The constant presented in \cite{J2013} relates to an affine invariant notion of curvature, but it can be upper bounded by the term we consider.}. In order to compare these rates with those obtained for block decomposition methods, we consider in this discussion that one iteration of a block decomposition method consists in updating $N$ arbitrarily chosen blocks (which is consistent with the notation in this paper). The multiplicative constant proposed in \cite{LJJSP2013} for the average case complexity of RBCG is proportional to $(L_FD^2 + H(\bx^0) - H^*)$ independently of the number of blocks\footnote{As in the case of CG, the result is not presented with this exact constant, but it is an upper bound that we use for discussion purposes.}. The independence of the constant with respect to the number of blocks is an important property that we do not recover for the cyclic block updating rule. This behavior is consistent with previous observations comparing random and cyclic update rules \cite{BTet2013} where multiplicative dependance in the number of  blocks also appeared in the convergence rate analysis of different types of methods.

For the sake of clarity, we will discuss the results of section \ref{Sec:Analysis} when $\bba= \bbi$, all $D_i$ are equal and $\beta_i = L_F$, $i=1,2,\ldots,N$. For the predefined stepsize, the quantity $C_1/L_FD^2$ ($C_1$ given by (\ref{L:PreMain:1})) grows like $\Omega(\sqrt{N})$. Furthermore, for the adaptive stepsize, the quantity $NC_2/L_FD^2$ ($C_2$ given by (\ref{L:MainAda:1})) grows like $\Omega(N^2)$. This is much worse than the $\sqrt{N}$ dependance of the predefined strategy although practical simulations tend to show that the adaptive rule is much faster. Therefore, there is a room for improvement in the analysis and this raises the natural question of the possibility to get multiplicative constants that do not depend on the number of blocks (which is in general the case for random block selection rules). As we have seen in Sections \ref{Sec:ML} and \ref{Sec:Numerics}, for SDCA and structured SVM applications, the effect is to have a multiplicative dependance of the convergence rate in the size of the training set. Therefore, although theoretically interesting, the rates we derive are limited to explain performances on large training sets (large number of blocks in the dual) which is the motivation for using block decomposition methods in machine learning applications.
}
	
\delete{However, contrary to the average case complexity estimate of RBCG \cite{LJJSP2013}, the exact expression of the rate is not a direct generalization of that of CG algorithm. Recall that for traditional CG method, the multiplicative constant is proportional to $L_FD^2$ \cite{J2013}. The constants given in Theorems \ref{T:RatePre} and \ref{T:RateAdap} feature additional multiplicative and additive terms. In particular the quantity $C_1/L_FD^2$, for $C_1$ given by (\ref{L:PreMain:1}) or the quantity $NC_2/L_FD^2$ for $C_2$ given by (\ref{L:MainAda:1}), show multiplicative dependence in the number of blocks $N$. This behavior is consistent with previous observations comparing random and cyclic update rules \cite{BTet2013}. It is expected that the analysis of cyclic rules lead to worse constants since they represent worst case complexity analysis. An important theoretical question is whether this can be leveraged or not for cyclic rules. In other words, is it possible to prove an explicit convergence rate of the form $M/k$ for the cyclic rule such that $M/(L_FD^2)$ does not depend on the number of blocks $N$?}

For practical applications however, this remark should be mitigated since we are comparing upper bounds. These bounds \ed{may only reflect limitations of the analysis, not of the methods,} and their comparison may not shed much light on the comparative behavior of different rules on practical problems. Indeed, our numerical experiments on synthetic and real-world examples reproducibly suggest an advantage of \ed{cyclic or random permutation variants} over fully random update rule. This is again something that has already been observed in other contexts \cite{BTet2013,shalev2013stochastic}. A question related to the discussion of the previous paragraph is to give a theoretical justification to this observation or eventually provide different iteration dependent block update rules that comply with it (see for example \cite{nutini2015coordinate} for an illustration in the context of gradient descent).

\subsection{\ed{Future Directions}}
Finally, a natural question is that of the extension of specificities of the conditional gradient method in our block decomposition setting. Potential directions include the following:
\begin{itemize}
	\item {\bf Linear convergence.} CG is known to converge linearly when the optimum is in the relative interior of the feasible set \cite{guelat1986some,lacoste2013affine} or when the feasible set is strongly convex and the gradient of the smooth part of the objective is non-zero on the feasible set \cite{LP1966}.
	\item {\bf Dual interpretation of the block decomposition.} Generalized CG is known to implicitly generate subgradient sequences \cite{B2012} related to the mirror descent algorithm \cite{BT03} applied to a dual problem. Similarly, a dual interpretation of RBCG is in terms of stochastic subgradient \cite{LJJSP2013}.
	\item {\bf Generalization of the results to exact line search stepsize strategies.} The analysis of such stepsize strategies is not problematic for CG or RBCG \cite{J2013,LJJSP2013}. However, we could not generalize it for CBCG, except in the quadratic case (see Section \ref{SSec:Exact}), and thus developing an analysis for the exact line search strategy is in our opinion an important task. \ed{Most of our analysis collapses without further assumptions, because it is no longer possible to get sufficient decrease conditions of the type of Lemma \ref{L:Suff}. We therefore expect that a different path needs to be considered.}
	\item \ed{{\bf Generalization to inexact oracles.} In many practical applications, the search direction given by the oracle is computed by an algorithm. It is therefore relevant to consider multiplicative errors in order to use well defined stopping criteria for the inner algorithm. This was already considered in the random block variant \cite{LJJSP2013}.}
\end{itemize}
\appendix
\section{Proof of Lemma \ref{L:BlockDescent}} \label{sec:app}
We adapt the standard proof of the descent Lemma, see e.g., \cite[Proposition A.24]{B99}.  Under the assumptions of the lemma, using the fundamental theorem of calculus for line integrals on the segment $[\bx, \bx + \bbu_i \bh]$, we have
	\begin{align}
		\label{eq:CBDL1}
		f(\bx + \bbu_i \bh) &= f(\bx) + \int_0^1 \left\langle\nabla f(\bx + t \bbu_i \bh), \bbu_i \bh \right\rangle\,dt\nonumber\\
		&= f(\bx) + \left\langle\nabla f(\bx), \bbu_i \bh \right\rangle + \int_0^1\left\langle\nabla f(\bx + t \bbu_i \bh) - \nabla f(\bx), \bbu_i \bh \right\rangle\,dt.
	\end{align}
	We can bound the integrand term for any $t \in [0,1]$ as follows
	\begin{align}
		\label{eq:CBDL2}
		\left\langle\nabla f(\bx + t \bbu_i \bh) - \nabla f(\bx), \bbu_i \bh \right\rangle\nonumber&=\left\langle \bba^T\nabla F(\bba(\bx + t \bbu_i \bh)) - \bba^T\nabla F( \bba \bx), \bbu_i \bh \right\rangle\nonumber\\
		&= \left\langle\nabla F(\bba(\bx + t \bbu_i \bh)) - \nabla F(\bba \bx), \bba \bbu_i \bh \right\rangle\nonumber\\
		&= \left\langle\nabla F(\bba \bx + t \bba_i \bh) - \nabla F(\bba \bx), \bba_i \bh \right\rangle\nonumber\\
		&\leq \left\|\nabla F(\bba \bx + t \bba_i \bh)) - \nabla F(\bba \bx)\right\| \cdot \left\|\bba_i \bh \right\|\nonumber\\
		&\leq t\beta_i ||\bba_i \bh||^2,
	\end{align}
	where we have used Cauchy-Schwartz inequality and Assumption  \ref{AssumptionB} to obtain the last two inequalities. Combining (\ref{eq:CBDL1}) and (\ref{eq:CBDL2}), we have
	\begin{align*}
		f(\bx + \bbu_i \bh) &\leq  f(\bx) + \left\langle\nabla f(\bx), \bbu_i \bh \right\rangle + \beta_i ||\bba_i \bh||^2 \int_0^1 t\;dt \\
		&= f(\bx) + \left\langle\nabla_i f(\bx), \bh \right\rangle + \frac{\beta_i ||\bba_i \bh||^2}{2},
	\end{align*}
	which proves the desired result.\hfill$\Box$
\bibliographystyle{abbrv}
\bibliography{notes-2}
\end{document}